 \newtheorem{theorem}{Theorem}[section]
 \newtheorem{lemma}[theorem]{Lemma}
 \newtheorem{proposition}[theorem]{Proposition}
 \theoremstyle{definition}
 \newtheorem{remark}[theorem]{Remark}
 \numberwithin{equation}{section}
\begin{document}

\title{Global Solutions for Incompressible Viscoelastic Fluids}
\author{Zhen Lei\footnote{School of Mathematical Sciences, Fudan
  University, Shanghai 200433, China; School of Mathematics and Statistics,
  Northeast Normal University, Changchun 130024, P. R. China. {\it
  Email: leizhn@yahoo.com}} \and Chun Liu\footnote{Department of Mathematics,
Pennsylvania State University, State college, PA 16802, USA.
Email:{liu@math.psu.edu}} \and Yi Zhou\footnote{School of
Mathematical Sciences, Fudan University, Shanghai 200433, China.
{\it Email: yizhou@fudan.ac.cn}}}
\date{}
\maketitle

\begin{abstract}
We prove the existence of both local and global smooth solutions
to the Cauchy problem in the whole space and the periodic problem
in the n-dimensional torus for the incompressible viscoelastic
system of Oldroyd-B type in the case of near equilibrium initial
data. The results hold in both two and three dimensional spaces.
The results and methods presented in this paper are also valid for
a wide range of elastic complex fluids, such as
magnetohydrodynamics, liquid crystals and mixture problems.
\end{abstract}

\section{Introduction}\label{intro}

Many of the rheological and hydrodynamical properties of complex
fluids can be attributed to the competition between the kinetic
energies and the internal elastic energies, through the special
transport properties of their respective internal elastic
variables. Moreover, any distortion of microstructures, patterns
or configurations in the dynamical flow will involve the
deformation tensor $F$. In contrast to the classical simple
fluids, where the internal energies can be determined by solely
the determinant of the deformation tensor $F$, the internal
energies of the complex fluids carry all the information of this
tensor \cite{Larson,Dafermos}.

In this paper we consider the following system describing
incompressible viscoelastic fluids. The existence results  we
obtain in this paper, together with the methods,  are valid in
many related systems,  such as those for general polymeric
materials \cite{Bird,Larson}, magnetohydrodynamics (MHD)
\cite{Davison}, liquid crystals \cite{deGennes,Lin2}, and the free
interface motion in mixture problems \cite{Liu2}. The entire
coupled hydrodynamical system we consider here contains a linear
momentum equation (force balance law), the incompressibility and a
microscopic equation specifying the special transport of the
elastic variable $F$:
\begin{equation}\label{a1}
\left\{
  \begin{array} {l@{\quad \ \quad}l}
    \nabla \cdot v = 0,\\
    v_t + v \cdot \nabla v + \nabla p
      = \mu\Delta v + \nabla\cdot\big[\frac{\partial
      W(F)}{\partial F}F^T\big],\\
    F_t + v \cdot \nabla F = \nabla vF.
  \end{array}
\right.
\end{equation}
Here $v(t, x)$ represents the velocity field of materials, $p(t,
x)$ the pressure, $\mu$ $(> 0)$ the viscosity, $F(t, x)$ the
deformation tensor and $W(F)$ the elastic energy functional. The
third equation is simply the consequence of the chain law. It can
also be regarded as the consistence condition of the flow
trajectories obtained from the velocity field $v$ and  those from
the deformation tensor $F$ \cite{Gurtin,Lin3,Liu,Dafermos}.
Moreover, in the right hand side of the momentum equation,
$\frac{\partial W(F)}{\partial F}$ is the Piola-Kirchhoff stress
tensor and $\frac{\partial W(F)}{\partial F}F^T$ is the
Cauchy-Green tensor, both in the incompressible case. The latter
is the change variable (from Lagrangian coordinate to Eulerian
coordinate) form of the former one.

Throughout this paper we will adopt the notations of
$$(\nabla v)_{ij} =
\frac{\partial v_{i}}{\partial x_{j}},\ \ (\nabla vF)_{ij} =
(\nabla v)_{ik}F_{kj},\ \ (\nabla\cdot F)_i = \partial_jF_{ij},$$
and summation over repeated indices will always be well
understood.

The above system is equivalent to the usual Oldroyd-B model for
viscoelastic fluids in infinite  Weissenberg number cases
\cite{Larson}. On the other hand, without the viscosity term, it
represents exactly the incompressible elasticity in Eulerian
coordinate. We want to refer to
\cite{Lei2,Lei3,Lin1,Lions,Liu,Larson,Bird,Dafermos} and their
references for the detailed derivation and physical background of
the above system.

Due to the elasticity nature of our system (also being regarded as
a first step in understanding the dynamical properties of such
systems), the study of the near equilibrium dynamics of the system
is both relevant and very important. For this purpose, we will
impose the following initial conditions on system (\ref{a1}):
\begin{equation}\label{a2}
F(0, x) = I + E_{0}(x),\ \ \ \ v(0, x) = v_{0}(x),\ \ \ \ x \in
\Omega.
\end{equation}
where $\Omega$ is the physical domain under consideration. We
further assume that $E_{0}(x)$ and $v_{0}(x)$ satisfy the
following  constraints:
\begin{equation}\label{a3}
\left\{
  \begin{array} {l@{\quad \ \quad}l}
   \nabla\cdot v_0 = 0,\\
   \det(I + E_0) = 1,\\
   \nabla\cdot E_0^T = 0,\\
   \nabla_mE_{0ij} - \nabla_jE_{0im} = E_{0lj}\nabla_lE_{0im} -
E_{0lm}\nabla_lE_{0ij}.
  \end{array}
\right.
\end{equation}
The first three are just the consequences of the incompressibility
condition \cite{Liu,Lin3} and the last one can be understood as
the consistency condition for changing of variables between the
Lagrangian and Eulerian coordinates (see Lemma \ref{lem23} and
Remark \ref{rem22}).

When $\Omega$ is a bounded domain with smooth boundary, we will
choose the following Dirichlet boundary conditions:
\begin{equation}\label{a4}
v(t, x) = 0,\ \ \ \ E(t, x) = 0,\ \ \ \ (t, x) \in [0,T) \times
\partial\Omega.
\end{equation}

Global existence of  classical solutions for system (\ref{a1})
with small initial data $E_0$ and $v_0$, for the Cauchy problem in
the whole space and the periodic problem in the n-dimensional
torus $\Omega = T^n$, will be proved in this paper. Our methods in
this paper are independent of the space dimensions. We point out
that the initial-boundary value problem (\ref{a1}), with
(\ref{a2}) and (\ref{a4})  can also be treated at a more lengthy
procedure, with few more technical difficulties than the ones
presented in this paper.

There have been a long history of studies in understanding
different phenomena for non-Newtonian fluids, such as those of
Erichsen-Rivlin models \cite{Schowalter,Slemrod}, the high-grade
fluid models \cite{Joseph,Lin2,MNR} and the Ladyzhenskaya models
\cite{Ladyzhenskaya}. There is an important difference between the
system (\ref{a1}) considered here and all the models mentioned
above, namely, the system (\ref{a1}) is an only partially
dissipative system.  This brings extra difficulties in the usual
existence results for small data global solutions.

There also exists a vast literature in the study of compressible
nonlinear elasticity \cite{Agemi,Sideris1} and nonlinear wave
equations
\cite{Alinhac2,Alinhac3,Christodoulou,Klainerman2,Klainerman4,Sideris2}.
The powerful techniques, the generalized energy methods, which
involve the rotation, Lorentz and scaling invariance, were
originally developed by John and Klainerman for studying the
solutions to nonlinear wave equations \cite{Klainerman1}. The
method was later generalized by Klainerman and Sideris to the
nonrelativistic wave equations and elasticity equations with a
smaller number of generators, with the absence of the Lorentz
invariance \cite{Klainerman4,Sideris1}. However, in the case of
viscoelasticity,  the presence of the viscosity term $\Delta v$
prevents the system from possessing the scaling invariant
properties. Moreover, the incompressibility is in direct violation
of the Lorentz invariant properties \cite{Sideris3,Sideris4}.

In the compressible nonlinear elasticity, the special null
condition on the energy functional $W(F)$ (or the nonlinear term
in the nonlinear wave equations) has to be imposed to carry out
the dispersive estimates for the classical solutions. Due to the
presence of the viscosity term $\Delta v$, no attempt has been
made in this paper to establish the dispersive estimates or to
understand the nonlinear wave interaction/cancellations using the
null conditions in these cases as those in
\cite{Sideris1,Sideris3} (although they are under investigation).
In fact, we use a kind of standard energy estimate as those used
for the Navier-Stokes equations. The methods in this paper are the
higher order energy estimates, which take advantages of the
presence of the dissipative term $\Delta v$ in the momentum
equation and do not take into account of the null conditions on
the elastic energy function $W(F)$. However, due to the absence of
the damping mechanism in the transport equation of $F$, we have to
use some special treatment which involves the revealing of the
special physical structures of the system. Notice that the usual
energy method \cite{Kawashima,Lei1} does not yield the small data
global existence, since there is no dissipation on the deformation
tensor $F$. Motivated by the basic energy law (see the next
section) and our earlier work in 2-D cases \cite{Lin3,Lei3}, we
analyze the induced stress term. After the usual expansion around
the equilibrium, we notice that $\nabla\cdot F$ does provide some
weak dissipation.

The other key ingredient in this paper is the observation that
$\nabla\times F$ is a high order term for initial data under our
physical considerations.  Formally, this is merely the statement
that the Lagrangian partial derivatives commute. Lemma \ref{lem23}
demonstrates the validity of this result in the evolution dynamics
of the PDE system.

The small data global existence  of the classical solutions for
the incompressible viscoelastic system (\ref{a1}) provides us
better physical understanding of this general system. The proof of
the theorems involves all the special coupling between the
transport and the induced stress, the incompressibility and the
near equilibrium expansion. Moreover, the bounds for the initial
data (which depends on the viscosity) may also shed some lights on
the large Weinssenberg number problem in viscoelasticity.

As for the related work on the existence of solutions to nonlinear
elastic (without viscosity) systems, there are works by Sideris
\cite{Sideris1} and Agemi \cite{Agemi} on the global existence of
classical small solutions to 3-D compressible elasticity  under
the assumption that the nonlinear terms satisfy the null
conditions. The former utilized the  generalized energy method
together with the additional weighted $L^2$ estimates, while the
latter's proof relies on the direct estimations of the fundamental
solutions. The global existence for 3-D incompressible elasticity
was then proved via the incompressible limit method
\cite{Sideris3} and very recently by a different method
\cite{Sideris4}. It is worth noticing that they used an Eulerian
description of the problem, which is equivalent to that in
\cite{Liu,Lin3}. Global existence for the corresponding 2-D
problem is still open,  and the related sharpest results can be
viewed in \cite{Alinhac2,Alinhac3}. For incompressible
viscoelastic fluids, Lin, Liu and Zhang \cite{Lin3} proved the
global existence in 2-D case, by introducing an auxiliary vector
field as the replacement of the transport variable $F$. Their
procedure illustrates the intrinsic nature of weak dissipation of
the induced stress tensor. Lei and Zhou \cite{Lei3} obtained the
same results via the incompressible limit where they directly
worked on the deformation tensor $F$. Recently Lei, Liu and Zhou
\cite{Lei2} proved global existence for 2-D small strain
viscoelasticity, without assumptions on the smallness of the
rotational part of the initial deformation tensor. Finally, after
the completion of this paper, we became aware  of the manuscript
\cite{ChZh} which studied the similar problems as in this paper.

The paper is organized as follows. In section 2, we review some of
the basic concepts  in mechanics. Some  important properties in
both fluid and elastic mechanics will also be presented. Section 3
is devoted to proving local existence. The proof of global
existence is completed in section 4. In section 5, the
incompressible limit is studied. The result may be important for
the study of numerical simulations and other engineering
applications.

\section{Basic Mechanics of Viscoelasticity}\label{sec1}

In this section, we will explore some of the intrinsic properties
of the viscoelastic system presented at the beginning of the
paper. These properties reflect the underlying physical origin of
the problem and in the meantime, are essential to the proof of the
global existence result here.

We recall the definition of the deformation tensor $F$. The
dynamics of any mechanical problem (under a velocity field), no
matter in fluids or solids, can be described by the flow map, a
time dependent family of orientation preserving diffeomorphisms
$x(t, X),\ 0 \leq t \leq T$. The material point (labelling) $X$ in
the reference configuration is deformed to the spatial position
$x(t, X)$ at time $t$, which is in the observer's coordinate.

The velocity field $v(t, x)$ determines the flow map, hence the
whole dynamics. However, in order to describe the changing of any
configuration or patterns during such dynamical processes, we need
to define the deformation tensor $\widetilde{F}(t, X)$:
\begin{equation}\label{c1}
\widetilde{F}(t, X) = \frac{\partial x}{\partial X}(t, X).
\end{equation}
Notice that this quantity is defined in the Lagrangian material
coordinate. Obviously it satisfies the following rule
\cite{Gurtin}:
\begin{equation}\label{c1a}
\frac{\partial \widetilde{F}(t, X)}{\partial t} = \frac{\partial
v\big(t, x(t, X)\big)}{\partial X}.
\end{equation}
In the Eulerian coordinate, the corresponding deformation tensor
$F(t, x)$ will be defined as $F(t, x(t, X)) = \widetilde{F}(t,
X)$. The equation (\ref{c1a}) will be accordingly transformed into
the third equation in system (\ref{a1}) through the chain rule
\cite{Larson,Gurtin,Liu}. In the context of the system, it can
also be interpreted as the consistency of the flow maps generated
by the velocity field $v$ and deformation field $F$.

The difference between fluids and solids lies in the fact that in
fluids, the internal energy can be determined solely by the
determinant part of $F$ (through density) and in elasticity, the
energy depends on the whole $F$.

The incompressibility can be exactly represented as
\begin{equation}
\det F = 1.
\end{equation}
The usual incompressible condition $\nabla\cdot v = 0$, the first
equation in (\ref{a1}), is the direct consequence of this
identity.

\par Since we are interested in small solutions, we define the
usual strain tensor in the form of
\begin{equation}
E = F - I.
\end{equation}

The following lemma is well known and appeared in \cite{Teman}. It
illustrates the incompressible consistence of the the system
(\ref{a1}).\\

\smallskip
\begin{lemma}\label{lem21}
Assume that the second equality of (\ref{a3}) is satisfied and
$(v, F)$ is the solution of system (\ref{a1}). Then the following
is always true:
\begin{equation}\label{b1}
\det (I + E) = 1
\end{equation}
for all time $t \geq 0$.
\end{lemma}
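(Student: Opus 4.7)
\smallskip

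\noindent\textbf{Proof Plan.} The plan is to derive a pure transport equation for the scalar $\det F$ of the form
\[
(\partial_t + v\cdot\nabla)\det F \;=\; 0,
\]
which, together with the initial constraint $\det F(0,x)=\det(I+E_0(x))=1$, forces $\det F(t,x)\equiv 1$ for all $t\ge 0$.

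The first step is Jacobi's formula for the determinant: for any smooth one-parameter family $F(s)$ of matrices,
\[
\frac{d}{ds}\det F(s) \;=\; \mathrm{tr}\bigl(\mathrm{adj}(F(s))\,F'(s)\bigr) \;=\; \det F(s)\cdot \mathrm{tr}\bigl(F(s)^{-1}F'(s)\bigr)
\]
whenever $F(s)$ is invertible. Applying this identity once to $\partial_t$ and once to the directional derivative $v_k\partial_k$ and adding, I obtain
\[
(\partial_t + v\cdot\nabla)\det F \;=\; \det F \cdot \mathrm{tr}\bigl(F^{-1}(\partial_t F + v\cdot\nabla F)\bigr).
\]

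The second step is to substitute the transport equation for $F$ from the third line of \eqref{a1}, namely $\partial_t F + v\cdot\nabla F = \nabla v\,F$. Using the cyclic invariance of the trace and the incompressibility $\nabla\cdot v=0$,
\[
\mathrm{tr}\bigl(F^{-1}\nabla v\, F\bigr) \;=\; \mathrm{tr}(\nabla v) \;=\; \nabla\cdot v \;=\; 0.
\]
Hence $(\partial_t + v\cdot\nabla)\det F=0$, so $\det F$ is conserved along the particle trajectories of $v$; combined with the hypothesis $\det F(0,x)=1$ this yields the claimed identity.

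The only mildly delicate point is the use of $F^{-1}$, which presumes invertibility of $F$. This is not a genuine obstacle: one can either work throughout with the adjugate form of Jacobi's identity (which, after using $\mathrm{adj}(F)F=(\det F)I$ and $\nabla\cdot v=0$, gives the same transport equation without ever inverting $F$), or invoke a short continuity argument. Since the local smooth solution $(v,F)$ satisfies $\det F(0,\cdot)=1$, $\det F$ stays near $1$ on a short time interval on which the computation above is rigorous; this in turn forces $\det F\equiv 1$ on that interval, so invertibility is preserved and the argument bootstraps to the whole lifespan of the solution.
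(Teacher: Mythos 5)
Your proof is correct and follows essentially the same route as the paper: both apply Jacobi's formula $\frac{\partial\det F}{\partial F}=(\det F)F^{-T}$ together with the transport equation for $F$ and the incompressibility $\nabla\cdot v=0$ to show that $\det F$ satisfies a pure transport equation, hence is conserved along particle trajectories. Your additional remark on handling invertibility via the adjugate form or a continuity argument is a sensible clarification of a point the paper leaves implicit.
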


\begin{proof}
     Using the  identity
     $\frac{\partial\det F}{\partial F} = (\det F)F^{- T}$,
     the first and third equations of (\ref{a1}) give the result
     \begin{eqnarray}\nonumber
     &&\big(\det(I + E)\big)_t + v\cdot\nabla\big(\det(I + E)\big)\\\nonumber
     && =\det(I + E)(I + E)^{- 1}_{ji}
     \nabla_kv_i(I + E)_{kj}\\\nonumber
     &&=\det(I + E)\nabla\cdot v = 0.
     \end{eqnarray}
     Thus, the proof of Lemma \ref{lem21} is completed.
\end{proof}

The following lemma played a crucial rule in our earlier work
\cite{Lin3,Liu}. It provides the third equation in (\ref{a1}) with
a div-curl structure of compensate compactness \cite{Liu}, as that
in the vorticity equation of 3-D incompressible Euler
equations.\\

\smallskip
\begin{lemma}\label{lem22}
Assume that the third equality of (\ref{a3}) is satisfied, then
the solution $(v, F)$ of the system (\ref{a1}) satisfies the
following identities:
\begin{equation}\label{b4}
        \nabla\cdot F^T = 0, \quad{\rm and} \quad \nabla\cdot E^T = 0,
\end{equation}
for all time $t \geq 0$.
\end{lemma}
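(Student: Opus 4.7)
The plan is to show that the quantity $g_i := (\nabla \cdot F^T)_i = \partial_j F_{ji}$ satisfies a pure transport equation along $v$, so that its vanishing is propagated from the initial data. Since $\nabla \cdot I^T = 0$ trivially, this immediately yields $\nabla \cdot E^T = 0$ as well.

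First I would apply $\partial_j$ to the $ji$-component of the third equation of (\ref{a1}),
\begin{equation*}
\partial_t F_{ji} + v_k \partial_k F_{ji} = (\partial_k v_j) F_{ki},
\end{equation*}
which produces
\begin{equation*}
\partial_t g_i + v_k \partial_k g_i = -(\partial_j v_k)(\partial_k F_{ji}) + (\partial_j \partial_k v_j) F_{ki} + (\partial_k v_j)(\partial_j F_{ki}).
\end{equation*}
Next I would exploit the structural conditions. By the incompressibility $\nabla\cdot v = 0$ (first equation of (\ref{a1})) we have $\partial_j v_j = 0$, hence $\partial_j \partial_k v_j = 0$, so the middle term on the right drops out. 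For the remaining two terms, relabelling the dummy indices $j \leftrightarrow k$ in $(\partial_k v_j)(\partial_j F_{ki})$ gives exactly $(\partial_j v_k)(\partial_k F_{ji})$, which cancels the first term. Consequently
\begin{equation*}
\partial_t g_i + v \cdot \nabla g_i = 0.
\end{equation*}

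Finally, because $F(0,x) = I + E_0(x)$ and $\partial_j \delta_{ji} = 0$, the initial value of $g_i$ is $\partial_j E_{0,ji} = (\nabla \cdot E_0^T)_i$, which vanishes by the third assumption in (\ref{a3}). The transport equation with zero initial data and a smooth divergence-free drift then forces $g_i(t, \cdot) \equiv 0$ for all $t \geq 0$, proving $\nabla \cdot F^T = 0$. Writing $F = I + E$ gives the companion identity $\nabla \cdot E^T = 0$.

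The only delicate point is recognising the right index relabelling that produces the cancellation between the two lower-order terms; the fact that this cancellation succeeds is precisely what makes the condition $\nabla \cdot E^T = 0$ a genuinely propagated constraint rather than merely a momentary compatibility condition. Everything else is routine once that observation is made, and no estimate beyond the uniqueness of smooth solutions to a linear transport equation is needed.
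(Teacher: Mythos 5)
Your proposal is correct and is essentially the paper's own proof: both apply the divergence to the transposed transport equation for $F$, kill the term $(\partial_j\partial_k v_j)F_{ki}$ via $\nabla\cdot v=0$, cancel the two remaining first-order terms by the index relabelling $j\leftrightarrow k$, and conclude by propagating the zero initial data $\nabla\cdot E_0^T=0$ along the resulting pure transport equation. No further comment is needed.
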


\begin{proof}
Following \cite{Lin3,Liu}, we transpose the third equation of
(\ref{a1}) and then
     apply the divergence operator to the resulting equation to
     yield
     \begin{eqnarray*}
     (\nabla_jF_{ji})_t &+& v\cdot\nabla(\nabla_jF_{ji}) +
\nabla_jv\cdot\nabla F_{ji}
     \\&=&
     \nabla_j\nabla_kv_jF_{ki} + \nabla_kv_j\nabla_jF_{ki}.
     \end{eqnarray*}
     Using the first equation of system (\ref{a1}), we obtain
     $$(\nabla_jF_{ji})_t + v\cdot\nabla(\nabla_jF_{ji}) = 0.$$
     Thus, the proof Lemma \ref{lem22} is completed.
\end{proof}

\smallskip\noindent
\begin{remark}\label{rem21}
We can derive the general form of the above identity $\nabla\cdot
F^T = 0$ from the definition of $\widetilde{F}(t, X)$ in
(\ref{c1}). However, the proof of the above lemma gives the
consistency of the system. The two algebraic identities
$\partial_{X_j}\frac{\partial\det\widetilde{F}}{\partial\widetilde{F}_{ij}}
= 0$ and $\frac{\partial\det\widetilde{F}}{\partial\widetilde{F}}
= (\det\widetilde{F})\widetilde{F}^{- T}$ give the result of
\begin{equation}\label{c3}
\partial_{X_j}(\det\widetilde{F}\widetilde{F}^{- T}_{ij}) = 0.
\end{equation}
Hence we obtain the following constraint on the deformation tensor
$F$:
\begin{eqnarray}\nonumber
\nabla_j\big[\frac{1}{\det F}F^T_{ij}\big] &=& F^{- T}_{jk}(t,
x)\partial_{X_k}\big[\frac{1}{\det
\widetilde{F}}\widetilde{F}^T_{ij}\big(t, X(t,
x)\big)\big]\\\nonumber &=&
\frac{1}{(\det\widetilde{F})}\det\widetilde{F}\widetilde{F}^{-
T}_{jk}\big(t, X(t, x)\big)
\partial_{X_k}\big[\frac{1}{\det \widetilde{F}}\widetilde{F}^T_{ij}\big(t,
X(t, x)\big)\big]\\\nonumber &=&
\frac{1}{\det\widetilde{F}}\partial_{X_k} \big[\widetilde{F}^{-
T}_{jk}\big(t, X(t, x)\big)\widetilde{F}^T_{ij}\big(t, X(t,
x)\big)\big] = 0.
\end{eqnarray}
\end{remark}

The key ingredient of the later proof in this paper is contained
in the following Lemma. It shows that $\nabla\times E$ is of
higher order.

\smallskip\noindent
\begin{lemma}\label{lem23}
Assume that the last equality of (\ref{a3}) is satisfied and $(v,
F)$ is the solution of system (\ref{a1}). Then the following
identity
\begin{equation}\label{b5}
\nabla_mE_{ij} - \nabla_jE_{im} = E_{lj}\nabla_lE_{im} -
E_{lm}\nabla_lE_{ij},
\end{equation}
holds for all time $t \geq 0$.
\end{lemma}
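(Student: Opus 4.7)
The plan is to recognize the identity as the persistence of a null Lagrangian constraint, then propagate it in time via a linear transport equation. Using $F = I+E$ and $\nabla E = \nabla F$, a direct expansion of the two terms $E_{lj}\nabla_l E_{im}$ and $E_{lm}\nabla_l E_{ij}$ (splitting $E_{l\cdot} = F_{l\cdot} - \delta_{l\cdot}$) shows that the identity to be proved is equivalent to the simpler symmetric relation
\begin{equation*}
\Phi_{ijm} := F_{lm}\nabla_l F_{ij} - F_{lj}\nabla_l F_{im} = 0.
\end{equation*}
Formally $\Phi_{ijm}$ measures the failure of commutation of Lagrangian partial derivatives (Remark \ref{rem21}), so it should vanish automatically whenever it vanishes initially; the task is to verify this at the PDE level.

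Next I would derive the evolution equation for $\Phi_{ijm}$. Writing $D := \partial_t + v\cdot\nabla$, the third equation of (\ref{a1}) reads $DF_{ij} = \nabla_k v_i\, F_{kj}$, and the standard commutator identity $D\nabla_l = \nabla_l D - \nabla_l v_k\,\nabla_k$ yields
\begin{equation*}
D(\nabla_l F_{ij}) = \nabla_l\nabla_k v_i\, F_{kj} + \nabla_k v_i\,\nabla_l F_{kj} - \nabla_l v_k\,\nabla_k F_{ij}.
\end{equation*}
Combining this with the Leibniz rule applied to $F_{lm}\nabla_l F_{ij}$, the cross terms $\nabla_k v_l\,F_{km}\,\nabla_l F_{ij}$ and $-F_{lm}\nabla_l v_k\,\nabla_k F_{ij}$ cancel after renaming $l\leftrightarrow k$. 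What remains is
\begin{equation*}
D\bigl(F_{lm}\nabla_l F_{ij}\bigr) = F_{lm}F_{kj}\,\nabla_l\nabla_k v_i + F_{lm}\nabla_k v_i\,\nabla_l F_{kj}.
\end{equation*}

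The key observation is then that the first term on the right is symmetric under $m\leftrightarrow j$ (interchange the dummy indices $l,k$ and use $\nabla_l\nabla_k = \nabla_k\nabla_l$), so it drops out of the antisymmetric combination defining $\Phi$. Subtracting the analogous equation with $m$ and $j$ swapped therefore gives
\begin{equation*}
D\Phi_{ijm} = \nabla_k v_i\,\Phi_{kjm},
\end{equation*}
which is a homogeneous linear first-order system in $\Phi$, with bounded coefficients whenever $\nabla v$ is smooth.

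Finally, the hypothesis that the last equation of (\ref{a3}) holds at $t=0$ is exactly $\Phi_{ijm}(0,x)=0$. Standard uniqueness (a Grönwall estimate along the flow of $v$) for the transport equation derived above then forces $\Phi_{ijm}(t,x)\equiv 0$ for all $t\geq 0$, which is the required identity (\ref{b5}). I expect the only nontrivial step to be the index bookkeeping in the computation of $D(F_{lm}\nabla_l F_{ij})$ and isolating the symmetric part that cancels; everything else is routine.
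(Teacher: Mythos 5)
Your proposal is correct, and its core strategy coincides with the paper's: isolate the defect quantity, show it satisfies a closed homogeneous linear transport equation, and conclude from zero initial data. Indeed your $\Phi_{ijm}=F_{lm}\nabla_lF_{ij}-F_{lj}\nabla_lF_{im}$ is, upon expanding $F=I+E$, exactly the quantity $\nabla_mE_{ij}-\nabla_jE_{im}+E_{lm}\nabla_lE_{ij}-E_{lj}\nabla_lE_{im}$ whose evolution the paper computes, and your final equation $D\Phi_{ijm}=\nabla_kv_i\,\Phi_{kjm}$ is literally the paper's last display. The difference is in the intermediate algebra, and yours is the better organized of the two. The paper works entirely in the variable $E$, repeatedly rewrites the right-hand side in divergence form, and has to invoke the incompressibility $\nabla\cdot v=0$ and the constraint $\nabla\cdot E^T=0$ from Lemma \ref{lem22} three separate times to undo those manipulations before the homogeneous structure emerges. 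Your route through $F$, using the commutator identity $D\nabla_l=\nabla_lD-\nabla_lv_k\nabla_k$ and the symmetry of $F_{lm}F_{kj}\nabla_l\nabla_kv_i$ under $j\leftrightarrow m$, obtains the same cancellations purely algebraically; I checked the index bookkeeping (the cancellation of $\nabla_kv_lF_{km}\nabla_lF_{ij}$ against $-F_{lm}\nabla_lv_k\nabla_kF_{ij}$ after relabelling, and the drop-out of the second-derivative term) and it is sound. This buys you two things: a shorter proof, and the slightly stronger observation that the identity propagates for any solution of $F_t+v\cdot\nabla F=\nabla vF$, independent of the incompressibility constraints --- which makes transparent the mechanical interpretation in Remark \ref{rem22} that $\Phi$ measures the failure of Lagrangian derivatives to commute. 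The only hypothesis you should state explicitly for the final Gr\"onwall step is $\nabla v\in L^1_tL^\infty_x$, which the solutions under consideration satisfy.
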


\begin{proof}
To prove the lemma, we will establish the evolution equation for
the quantity $\nabla_mE_{ij} - \nabla_jE_{im} -
E_{lj}\nabla_lE_{im} + E_{lm}\nabla_lE_{ij}$.

First, by the third equation of (\ref{a1}), we can
    get
    \begin{eqnarray}\label{b6}
          \partial_t\nabla_mE_{ij} &+& v\cdot\nabla\nabla_mE_{ij} +
\nabla_mv\cdot\nabla
          E_{ij}\\\nonumber
          &=& \nabla_m\nabla_kv_iE_{kj} + \nabla_kv_i\nabla_mE_{kj} +
\nabla_m\nabla_jv_i.
    \end{eqnarray}
    Thus, we have
\begin{eqnarray}\label{b7}
&&\partial_t\big(\nabla_mE_{ij} - \nabla_jE_{im}\big) +
  v\cdot\nabla\big(\nabla_mE_{ij} - \nabla_jE_{im}\big)\\\nonumber
&&\quad +\ \big(\nabla_mv\cdot\nabla E_{ij} - \nabla_jv\cdot\nabla
  E_{im}\big)\\\nonumber
&&= \big(\nabla_m\nabla_kv_iE_{kj} -
  \nabla_j\nabla_kv_iE_{km}\big)\\\nonumber
&&\quad +\ \nabla_kv_i\big(\nabla_mE_{kj} - \nabla_jE_{km}\big).
    \end{eqnarray}
On the other hand, combining (\ref{b6}) and the third equation of
(\ref{a1}), we have
\begin{eqnarray}\nonumber
&&\partial_t\big(E_{lm}\nabla_lE_{ij}\big) +
v\cdot\nabla\big(E_{lm}\nabla_lE_{ij}\big)\\\nonumber &&=
\nabla_l\big[\nabla_kv_iE_{kj}E_{lm} + \nabla_jv_iE_{lm} +
\nabla_mv_lE_{ij}\big].
\end{eqnarray}
     Thus, we get
\begin{eqnarray}\label{b8}
    &&\partial_t\big(E_{lm}\nabla_lE_{ij} - E_{lj}\nabla_lE_{im}\big)
       + v\cdot\nabla\big(E_{lm}\nabla_lE_{ij} -
       E_{lj}\nabla_lE_{im}\big)\\\nonumber
    &&   = \nabla_l\big[\nabla_kv_i\big(E_{kj}E_{lm} - E_{km}E_{lj}\big)
       + \big(\nabla_jv_iE_{lm} - \nabla_mv_iE_{lj}\big)\\\nonumber
     &&\quad +\ \big(\nabla_mv_lE_{ij} - \nabla_jv_lE_{im}\big)\big].
\end{eqnarray}

Combing (\ref{b7}) and (\ref{b8}), we obtain
\begin{eqnarray}\nonumber
&&\partial_t\big(\nabla_mE_{ij} - \nabla_jE_{im}
  + E_{lm}\nabla_lE_{ij} - E_{lj}\nabla_lE_{im}\big)\\\nonumber
&&\quad +\ v\cdot\nabla\big(\nabla_mE_{ij} - \nabla_jE_{im} +
  E_{lm}\nabla_lE_{ij} - E_{lj}\nabla_lE_{im}\big)\\\nonumber
&&= \nabla_l\big[\nabla_kv_i\big(E_{kj}E_{lm} -
E_{km}E_{lj}\big)\big] +\ \nabla_kv_i\big(\nabla_mE_{kj} -
\nabla_jE_{km}\big)
\end{eqnarray}
Using the first equation of (\ref{a1}), we
    have
    \begin{eqnarray}\nonumber
       &&\partial_t\big(\nabla_mE_{ij} - \nabla_jE_{im}
           + E_{lm}\nabla_lE_{ij} -
           E_{lj}\nabla_lE_{im}\big)\\\nonumber
       &&\quad +\ v\cdot\nabla\big(\nabla_mE_{ij} -
       \nabla_jE_{im} + E_{lm}\nabla_lE_{ij} -
     E_{lj}\nabla_lE_{im}\big)\\\nonumber
       &&= \nabla_l\big[\nabla_ku_i\big(E_{kj}E_{lm} - E_{km}E_{lj}\big)
           + v_i\big(\nabla_mE_{lj} - \nabla_jE_{lm}\big)\big]
    \end{eqnarray}
On the other hand, noting (\ref{b4}), this gives
    \begin{eqnarray}\nonumber
       &&\partial_t\big(\nabla_mE_{ij} - \nabla_jE_{im}
           + E_{lm}\nabla_lE_{ij} -
           E_{lj}\nabla_lE_{im}\big)\\\nonumber
       &&\quad +\ v\cdot\nabla\big(\nabla_mE_{ij} -
       \nabla_jE_{im} + E_{lm}\nabla_lE_{ij} -
       E_{lj}\nabla_lE_{im}\big)\\\nonumber
       &&= \nabla_l\big[- \big(v_iE_{kj}\nabla_kE_{lm} -
v_iE_{km}\nabla_kE_{lj}\big)
           + v_i\big(\nabla_mE_{lj} - \nabla_jE_{lm}\big)\big]\\\nonumber
&&\quad +\ \nabla_l\big[\nabla_k\big(v_iE_{kj}E_{lm} -
v_iE_{km}E_{lj}\big)\big]\\\nonumber
       &&= \nabla_l\big[v_i\big(\nabla_mE_{lj} - \nabla_jE_{lm}
           + E_{km}\nabla_kE_{lj} -
           E_{kj}\nabla_kE_{lm}\big)\big]\\\nonumber
       &&\quad +\ \nabla_l\nabla_k\big(v_iE_{kj}E_{lm}\big)
           - \nabla_l\nabla_k\big(v_iE_{km}E_{lj}\big)\\\nonumber
  &&= \nabla_l\big[v_i\big(\nabla_mE_{lj} - \nabla_jE_{lm}
           + E_{km}\nabla_kE_{lj} - E_{kj}\nabla_kE_{lm}\big)\big]
\end{eqnarray}
At last, by using (\ref{b4}) and the first equation of (\ref{a1})
once again, we get the evolution of the concerned quantity.
       \begin{eqnarray}\nonumber
       &&\partial_t\big(\nabla_mE_{ij} - \nabla_jE_{im}
           + E_{lm}\nabla_lE_{ij} -
           E_{lj}\nabla_lE_{im}\big)\\\nonumber
       &&\quad +\ v\cdot\nabla\big(\nabla_mE_{ij} -
       \nabla_jE_{im} + E_{lm}\nabla_lE_{ij} -
       E_{lj}\nabla_lE_{im}\big)\\\nonumber
       &&= \nabla_lv_i\big(\nabla_mE_{lj} - \nabla_jE_{lm}
           + E_{km}\nabla_kE_{lj} -
           E_{kj}\nabla_kE_{lm}\big)\\\nonumber
           &&\quad +\ v_i\big(\nabla_lE_{km}\nabla_kE_{lj} -
        \nabla_lE_{kj}\nabla_kE_{lm}\big)\\\nonumber
       &&= \nabla_lv_i\big(\nabla_mE_{lj} - \nabla_jE_{lm}
          + E_{km}\nabla_kE_{lj} - E_{kj}\nabla_kE_{lm}\big).
    \end{eqnarray}
During the calculations, we use the incompressibility conditions
(\ref{b4}) and the first equation of (\ref{a1}) in the second, the
third and the sixth equality. The last equality proves the lemma,
since the above quantity will maintain zero all the time with zero
initial condition.
\end{proof}

\smallskip\noindent
\begin{remark}\label{rem22}
In order to demonstrate the mechanical background of the above
lemma, we again go back to  the definition of $\widetilde{F}(t,
X)$ in (\ref{c1}). Formally, the fact that the Lagrangian
derivatives commute yields the fact that
$\partial_{X_k}\widetilde{F}_{ij} =
\partial_{X_j}\widetilde{F}_{ik}$, which is equivalent to
$\widetilde{F}_{lk}\nabla_lF_{ij}(t, x(t, X)) =
\widetilde{F}_{lj}\nabla_lF_{ik}(t, x(t, X)).$ Thus, one has
$$F_{lk}\nabla_lF_{ij}(t, x) =
F_{lj}\nabla_lF_{ik}(t, x)$$ which means that
$$\nabla_kE_{ij} + E_{lk}\nabla_lE_{ij} = \nabla_jE_{ik} +
E_{lj}\nabla_lE_{ik}(t, x)$$ This is exactly the result in the
above lemma. However, the validity of the statement for any
solution of the system (\ref{a1}) is the merit of the above lemma.
\end{remark}

Finally, we make some simplifications for system (\ref{a1}).

In addition to their definitions as the elementary symmetric
functions of the eigenvalues, the invariants $\gamma(A)$ of any $3
\times 3$ matrix $A$ are conveniently expressed as
$$\gamma_1(A) = trA,\ \ \gamma_2(A)
= \frac{1}{2}\big[(trA)^2 - trA^2\big],\ \ \gamma_3(A) = \det A.$$
On the other hand, one can easily get the identity
$$\gamma_3(A + I) = 1 + \gamma_1(A) + \gamma_2(A) + \gamma_3(A).$$

Combining the above identity with (\ref{b1}), one can obtain the
incompressible constraint on $E$ as
\begin{equation}\label{b2}
trE = - \det E - \gamma_2(E).
\end{equation}
By a similar process, the incompressible constraint on $E$ in
2-dimension case takes
\begin{equation}\label{b3}
trE = - \det E.
\end{equation}

 Next, we will consider the isotropic strain energy function $W(F)$.
We let $f_1(E)$,  $f_2(E)$ and $f_3(E)$ represent any generic
terms of degree two or higher at the origin. In the isotropic
case, $W$ depends on $F$ through the principal invariants of the
strain matrix $FF^T$ \cite{Gurtin}. Define the linearized
elasticity tensor as
\begin{equation}\label{e1}
A^{ij}_{lm} = \frac{\partial^2W}{\partial F_{il}F_{jm}}(I).
\end{equation}

Suppose that the strain energy function $W(F)$ is isotropic and
frame indifferent, the strong Legendre-Hadamard ellipticity
condition imposed upon the linearized elasticity tensor (\ref{e1})
takes the form of:
\begin{equation}\label{e2}
A^{ij}_{lm} = (\alpha^2 - 2\beta^2)\delta_{il}\delta_{jm} +
\beta^2(\delta_{im}\delta_{jl} + \delta_{ij}\delta_{lm}),\quad
{\rm with}\ \alpha > \beta > 0,
\end{equation}
where the positive parameters $\alpha$ and $\beta$ depend only on
$W$. They represent the speeds of propagation of pressure and
shear waves, respectively. By (\ref{b4}), (\ref{e1}) and
(\ref{e2}), we have
\begin{eqnarray}\label{e3}
\nabla_l\big[\frac{\partial W(F)}{\partial F}F^T\big]_{il}
  &=& \nabla_l\big[\frac{\partial W(F)}{\partial F}E^T\big]_{il}
      + \nabla_l\frac{\partial W(F)}{\partial F_{il}}\\\nonumber
  &=& \nabla_l\big[\frac{\partial W(F)}{\partial F}E^T\big]_{il}
      + \frac{\partial^2W(I)}{\partial F_{il}\partial
F_{jm}}\nabla_lE_{jm}
      + \nabla_l f_1(E)_{il}\\\nonumber
  &=& (\alpha^2 - 2\beta^2)\nabla_i trE + \beta^2\big(\nabla\cdot E^T +
\nabla\cdot E\big)_i
      + \nabla_l f_2(E)_{il}\\\nonumber
  &=& \beta^2(\nabla\cdot E)_i + \nabla_l f_3(E)_{il},
\end{eqnarray}
where we also used the assumptions that the reference
configuration is a stress-free state:
\begin{equation}\label{w}
\frac{\partial W(I)}{\partial F} = 0.
\end{equation}
Without loss of generality, we assume that the constant $\beta =
1$. In particular, in what follows, we only consider the case of
the Hookean elastic materials: $\nabla\cdot f_3(E) = \nabla\cdot
(EE^T)$. The system is
\begin{equation}\label{e4}
\left\{
  \begin{array} {l@{\quad \ \quad}l}
  \nabla \cdot v = 0,\\
  v_t^i + v \cdot \nabla v^i + \nabla_i p
    = \mu\Delta v^i + E_{jk}\nabla_j E_{ik} + \nabla_jE_{ij},\\
    E_t + v \cdot \nabla E = \nabla vE + \nabla v.
  \end{array}
\right.
\end{equation}
All the following proofs and results are also valid for general
isotropic elastic energy functions satisfying the strong
Legendre-Hadamard ellipticity condition, as those in (\ref{e3}).

\section{Local Existence}

Although the proof of the following local existence theorem is
lengthy, the idea is straight forward and had been carried out in
the case of 2-D Hookean elasticity in \cite{Lin3}. For a
self-contained presentation, we will carry out the similar proofs
into our general cases.

\smallskip
\begin{theorem}\label{thm31}
Let $k \geq 2$ be a positive integer, and $v_0,\ E_0 \in
H^k(\Omega)$ which satisfy the incompressible constraint
(\ref{a3}). Suppose that the isotropic elastic energy function
satisfies the constitutive assumption (\ref{e2}). Then there
exists a positive time $T$, which depends only on
$\|v_{0}\|_{H^{2}}$ and $\|E_{0}\|_{H^{2}}$, such that the initial
value problem or the periodic initial-boundary value problem for
(\ref{a1}) (or (\ref{e4})) has a unique classical solution in the
time interval $[0, T)$ which satisfies
\begin{equation}\label{f1}
\left\{
  \begin{array} {l@{\quad \ \quad}l}
  \partial_t^{j}\nabla^{\alpha}v \in L^{\infty}\big(0, T; H^{k - 2j -
        |\alpha|}(\Omega)\big) \cap L^{2}\big(0, T; H^{k - 2j - |\alpha| +
1}(\Omega)\big),\\
  \partial_t^{j}\nabla^{\alpha}E \in L^{\infty}\big(0, T; H^{k - 2j -
        |\alpha|}(\Omega)\big).
  \end{array}
\right.
\end{equation}
for all $j$, $\alpha$ satisfying $2j + |\alpha| \leq k$. Moreover,
if $T^{\star} < + \infty$ is the lifespan of the solution, then
\begin{equation}\label{f2}
\int_{0}^{T^{*}}\|\nabla v\|_{H^{2}}^{2}\ dt = + \infty.
\end{equation}
\end{theorem}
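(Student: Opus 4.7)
The plan is to construct the solution by a standard Picard iteration adapted to the partially dissipative structure of (\ref{e4}). Set $(v^0,E^0)=(v_0,E_0)$ and, given $(v^n,E^n)$, define $(v^{n+1},E^{n+1})$ as the solution of the linearized problem
\begin{equation*}
\left\{
\begin{array}{l}
\nabla\cdot v^{n+1}=0,\\
\partial_t v^{n+1} + v^n\cdot\nabla v^{n+1} + \nabla p^{n+1} = \mu\Delta v^{n+1} + \nabla_j E^n_{ij} + E^n_{jk}\nabla_j E^n_{ik},\\
\partial_t E^{n+1} + v^n\cdot\nabla E^{n+1} = \nabla v^{n+1} + \nabla v^{n+1}\,E^n,
\end{array}
\right.
\end{equation*}
with shared initial data $(v_0,E_0)$. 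The first block is a nonstationary Stokes system with smooth drift, solvable in the energy class by standard parabolic theory; the second is a linear transport equation, solvable along the characteristics of $v^n$. The pressure is recovered from the divergence-free constraint by solving a Poisson equation, and the algebraic constraints (\ref{a3}) are ultimately propagated by Lemmas \ref{lem21}--\ref{lem23} once the nonlinear limit is identified.

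The crux is a uniform $H^k$ estimate on a short interval $[0,T]$ independent of $n$. Applying $\partial^\alpha$ with $|\alpha|\le k$ to both equations, pairing against $\partial^\alpha v^{n+1}$ and $\partial^\alpha E^{n+1}$ respectively, and handling the transport and quadratic nonlinearities by Moser-type commutator estimates, one obtains schematically
\begin{equation*}
\frac{d}{dt}\|v^{n+1}\|_{H^k}^2 + \mu\|\nabla v^{n+1}\|_{H^k}^2 \le C\,\Phi\bigl(\|v^n\|_{H^k},\|E^n\|_{H^k}\bigr)\bigl(1+\|v^{n+1}\|_{H^k}^2\bigr),
\end{equation*}
\begin{equation*}
\frac{d}{dt}\|E^{n+1}\|_{H^k}^2 \le C\|v^n\|_{H^k}\|E^{n+1}\|_{H^k}^2 + C\bigl(1+\|E^n\|_{H^k}\bigr)\|\nabla v^{n+1}\|_{H^k}\,\|E^{n+1}\|_{H^k},
\end{equation*}
with $\Phi$ polynomial. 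The delicate point, and the main obstacle, is that the source $\nabla v^{n+1}$ in the $E$-equation carries $k+1$ space derivatives of velocity with no compensating dissipation on $E$. This is circumvented by combining the parabolic control $\|\nabla v^{n+1}\|_{L^2_t H^k}$ supplied by the first inequality with Cauchy--Schwarz in time, $\int_0^t\|\nabla v^{n+1}\|_{H^k}\,ds\le\sqrt{t}\,\|\nabla v^{n+1}\|_{L^2_t H^k}$, so the cross-term carries a factor $\sqrt{T}$ that is absorbed for $T$ small. Closing the induction by Gronwall produces $T=T(\|v_0\|_{H^2},\|E_0\|_{H^2})>0$ (Sobolev embedding in dimensions two and three reduces the dependence on the iterates to their $H^2$ norms) and a uniform bound $\|(v^n,E^n)\|_{L^\infty_T H^k}+\|\nabla v^n\|_{L^2_T H^k}\le M$. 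The time-derivative regularity in (\ref{f1}) then follows by trading two space derivatives for one time derivative using the equations themselves.

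To pass to the limit, the differences $\delta v^n=v^{n+1}-v^n$ and $\delta E^n=E^{n+1}-E^n$ satisfy a linear system whose coefficients are controlled by $M$; a standard estimate in the weaker norm $L^\infty_T L^2\cap L^2_T H^1$ for $\delta v^n$ and $L^\infty_T L^2$ for $\delta E^n$ yields contraction after possibly shrinking $T$. The limit $(v,E)$ lies in the class (\ref{f1}) by interpolation between the uniform $H^k$ bound and the strong $L^2$ convergence, and it solves (\ref{e4}). Uniqueness follows from the same difference estimate applied to two candidate solutions. For the continuation criterion (\ref{f2}), assume by contradiction $T^\star<\infty$ and $\int_0^{T^\star}\|\nabla v\|_{H^2}^2\,dt<\infty$; rerunning the a priori estimate on the full nonlinear system, the Gronwall coefficient is majorized by a polynomial in $\|\nabla v\|_{L^\infty}\le C\|\nabla v\|_{H^2}$ (Sobolev in dimensions two and three), hence is in $L^2(0,T^\star)$, so the $H^k$ norm remains bounded up to $T^\star$ and the solution extends past it. The principal obstacle throughout is the absence of any damping on $E$; it is resolved by exploiting the parabolic gain on $v$ together with the $\sqrt{T}$-smallness of the short-time cross-terms.
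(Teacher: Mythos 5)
Your proposal is correct in outline, but it takes a genuinely different route from the paper's at the two places where the argument has real content. The construction differs cosmetically (Picard iteration on a linearized Stokes-plus-transport system versus the paper's Galerkin scheme followed by a priori estimates on the nonlinear system). More substantively, the mechanism for closing the base-level $H^2$ estimate differs: the paper does not use your $\sqrt{T}$ absorption there. Instead it tests the momentum equation against $\Delta v$ and against $v_t$, differentiates the momentum equation in time and tests against $v_t$ to produce the dissipation $\int_0^T\|\nabla v_t\|^2\,dt$, and then uses the elliptic bound $\|\nabla\Delta v\|\leq C\|\nabla v_t\|+\mathrm{l.o.t.}$ to convert this into $L^2_t$ control of $\|\nabla\Delta v\|$, which is the coefficient driving the Gronwall inequality for $\frac{d}{dt}\|\Delta E\|^2$. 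Your route --- pair $\partial^\alpha$ of each equation with $\partial^\alpha$ of the unknown, read $\|\nabla v\|_{L^2_tH^k}$ off the viscous dissipation, and absorb the undamped source $\nabla v$ in the $E$-equation through $\int_0^t\|\nabla v\|_{H^k}\,ds\leq\sqrt{t}\,\|\nabla v\|_{L^2_tH^k}$ --- is simpler and is essentially what the paper itself does in the induction step for odd $k$ (there by Young's inequality against the dissipation rather than by Cauchy--Schwarz in time). What the paper's heavier route buys is the mixed space-time regularity in (\ref{f1}) directly, since the quantities $\|\partial_t^j\nabla^\alpha v\|$ are carried through the energy scheme; you recover these a posteriori from the equations, which is legitimate. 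One point where your sketch is too quick: at top order the quadratic stress $E_{jk}\nabla_jE_{ik}$ carries $k+1$ derivatives of $E$ with no dissipation on $E$ to absorb them, and the paper disposes of the resulting term after an integration by parts by invoking the propagated constraint $\nabla\cdot E^T=0$ of Lemma \ref{lem22}. In your iteration the iterates need not satisfy this constraint, so you must either integrate by parts so that the extra derivative lands on $v^{n+1}$ (paying with a fraction of the viscous dissipation via Young or with $L^4$--$L^4$ interpolation), or arrange the scheme so that the constraint propagates; this is repairable but should be said. Your continuation-criterion argument coincides with the paper's.
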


\begin{proof} By the  Galerkin's method originally for standard
Navier-Stokes equation \cite{Teman} and later modified for
different coupling system \cite{Lin1}, we can construct the
approximate solutions to the momentum equation of $v$, and then
substitute this approximate $v$ into the transport equation to get
the appropriate solutions of $E$. To prove the convergence of the
sequence consisting of the approximate solutions, we need only a
$priori$ estimates for them. For simplicity, we will establish a
$priori$ estimates for the smooth solutions of (\ref{e4}).
Therefore, let us assume in the rest of this section that $(v, E)$
is a local smooth solution to system (\ref{e4}) on some time
interval $[0, T)$.

In this paper, $\|\cdot\|$ will denote the $L^2(\Omega)$ norm,
where $\Omega \subseteq R^n$ will be either an n-dimensional torus
$T^n$, or the entire space $R^n$ for $n = 2$ or 3, and $(\cdot,
\cdot)$ the inner product of standard space $L^2(\Omega)^d$ with
$d \in \{1, 2, 3, 4, 9\}$.

The original system (\ref{a1}) possesses the following energy law:
\begin{equation}\label{f3a}
\frac{d}{dt}\Big(\frac{1}{2}\|v\|^2 + \int_\Omega
\big(W(F)-W(I)\big) \, dx\Big) + \mu\|\nabla v\|^2 = 0.
\end{equation}
Equivalently, for (\ref{e4}), the corresponding energy law will
be:
\begin{equation}\label{f3}
\frac{1}{2}\frac{d}{dt}\big(\|v\|^2 + \|E\|^2\big) + \mu\|\nabla
v\|^2 = 0,
\end{equation}
which follows from the third equation of (\ref{e4}) and the
incompressibility.

The following well-known interpolation inequalities are results of
the Sobolev embedding theorems and scaling techniques
\cite{Alinhac1,Lin1}. They will be frequently used in the
following higher order energy estimates.\\

\begin{lemma}\label{lem31}
Assume $v \in W^{k,2}(\Omega), k \geq 3$. The following
interpolation inequalities hold.
\begin{enumerate}
     \item For $1 \leq s \leq k$,
          \begin{eqnarray*}
               \|v\|_{L^{4}} &\leq& C\|v\|^{1 - \frac{1}{2s}}
\|\nabla^{s} v\|^{\frac{1}{2s}},
               \ \ \Omega \subseteq R^{2},\\
               \|v\|_{L^{4}} &\leq& C\|v\|^{1 - \frac{3}{4s}}
\|\nabla^{s} v\|^{\frac{3}{4s}},
               \ \ \Omega \subseteq R^{3},\\
               \|\nabla v\|_{L^{4}} &\leq& C\|v\|^{1 - \frac{3}{2(s +
1)}} \|\nabla^{s}\nabla v\|^
               {\frac{3}{2(s + 1)}},\ \ \Omega \subseteq R^{2},\\
               \|\nabla v\|_{L^{4}} &\leq& C\|v\|^{1 - \frac{7}{4(s +
1)}} \|\nabla^{s}\nabla v\|^
               {\frac{7}{4(s + 1)}},\ \ \Omega \subseteq R^{3},\\
               \|\Delta v\|_{L^{4}} &\leq& C\|v\|^{1 - \frac{5}{2(s +
2)}}\|\nabla^s\Delta v\|^
               {\frac{5}{2(s + 2)}},\ \ \Omega \subseteq R^{2},\\
               \|\Delta v\|_{L^{4}} &\leq& C\|v\|^{1 - \frac{11}{4(s +
2)}}\|\nabla^s\Delta v\|^
               {\frac{11}{4(s + 2)}},\ \ \Omega \subseteq R^{3},
          \end{eqnarray*}
     \item For $2 \leq s \leq k$,
          \begin{eqnarray*}
               \|v\|_{L^{\infty}} &\leq& C\|v\|^{1 - \frac{1}{s}}
\|\nabla^{s} v\|^{\frac{1}{s}},
               \ \ \Omega \subseteq R^{2},\\
               \|v\|_{L^{\infty}} &\leq& C\|v\|^{1 - \frac{3}{2s}}
\|\nabla^{s} v\|^{\frac{3}{2s}},
               \ \ \Omega \subseteq R^{3},\\
               \|\nabla v\|_{L^{\infty}} &\leq& C\|v\|^{1 - \frac{2}{s
+ 1}}\|\nabla^s\nabla v\|^
               {\frac{2}{s + 1}},\ \ \Omega \subseteq R^{2},\\
               \|\nabla v\|_{L^{\infty}} &\leq& C\|v\|^{1 -
\frac{5}{2(s + 1)}}\|\nabla^s\nabla
               v\|^{\frac{5}{2(s + 1)}},\ \ \Omega \subseteq R^{3}.
          \end{eqnarray*}
\end{enumerate}
\end{lemma}

The following two propositions can be found in
\cite{Alinhac1,Klainerman3,Lin1}.\\

\smallskip
\begin{proposition}\label{prop31}
If $g: R^n \longrightarrow R$ is a smooth function with $g(0) =
0$, then, for any positive constant $k$, $g(v) \in L^\infty \cap
H^k$ if $v \in L^\infty \cap H^k$ and
\begin{equation}\nonumber
\|g(v)\|_{H^k} \leq C\|v\|_{H^k}
\end{equation}
for some constant $C$ depending only on $g$, $k$ and
$\|v\|_{L^\infty}$.
\end{proposition}

\smallskip
\begin{remark}\label{rem31}
The above proposition is only used in the cases of general elastic
energy functions.
\end{remark}

\smallskip
\begin{proposition}\label{prop32}
Assume that $f,\ g \in H^s(\Omega)$. Then for any multi-index
$\alpha,\ |\alpha| \leq s$, we have
\begin{equation}\label{f4}
      \left\{\begin{array}{l@{\quad\ \quad}l}
  \|\nabla^\alpha(fg)\| \leq C\big(\|f\|_{L^\infty}\|\nabla^s g\| +
\|g\|_{L^\infty}\|\nabla^sf\|\big),\\
  \|\nabla^\alpha(fg) - f\nabla^\alpha g\| \leq C\big(\|\nabla
f\|_{L^\infty}\|\nabla^{s - 1}g\|
  + \|\nabla g\|_{L^\infty}\|\nabla^{s - 1}f\|\big).
\end{array}\right.
\end{equation}
for some constant $C$ depending only on $n$.
\end{proposition}

We divide the proof of the theorem \ref{thm31} into two parts.

\textbf{Step 1.} $H^{2}$ estimate.

Integrate (\ref{f3}) over $(0, t)$, one obtains
\begin{equation}\label{g1}
\big(\|v\|^{2} + \|E\|^{2}\big) + 2\mu\int_{0}^t\|\nabla v\|^{2}\
dt = \big(\|v_{0}\|^{2} + \|E_{0}\|^{2}\big).
\end{equation}
\par By taking the $L^{2}$ inner product of
the second equation in (\ref{e4}) with $\Delta v$, using Lemma
\ref{lem31} and integration by parts, we have
\begin{eqnarray}\nonumber
&&\mu\|\Delta v\|^{2}\\\nonumber &&= (v_t, \Delta v) + (v \cdot
  \nabla v, \Delta v) + (\nabla p, \Delta v)\\\nonumber
&&\quad -\ (E_{jk}\nabla_j E_{ik}, \Delta
  v^{i}) - (\nabla\cdot E, \Delta v)\\\nonumber
&&\leq C\|\Delta v\|\Big(\|v_t\| + \|v\|_{L^{\infty}}\|\nabla v\|
  + \|E\|_{L^{\infty}}\|\nabla E\| + \|\nabla E\|\Big)\\\nonumber
&&\leq C\|\Delta v\|\Big(\|v_t\| + \|v\|^{1 - \theta(2)}\|\Delta
  v\|^{\theta(2)}\|\nabla v\|\\\nonumber
&&\quad +\ \big(\|E\|^{1 - \theta(2)}\|\Delta E\|^{\theta(2)} +
  1\big)\big(\|E\| + \|\Delta E\|\big)\Big)\\\nonumber
&&\leq \frac{1}{2}\mu\|\Delta v\|^{2} + g\big(\|v_t\|, \|\nabla
  v\|, \|\Delta E\|\big).
\end{eqnarray}
where $\theta(s)$ $(0 < \theta(s) < 1)$ represents a generic
function which is determined by Lemma \ref{lem31}, and $g(\cdot,
\cdot, \cdot,)$ represents any generic nonnegative and increasing
function of its variables. Thus, we have
\begin{equation}\label{g2}
\|\Delta v\|^{2} \leq g\big(\|v_t\|, \|\nabla v\|, \|\Delta
E\|\big).
\end{equation}

In the meantime, by taking the $L^{2}$ inner product of the second
equation in (\ref{e4}) with $v_t$, using Lemma \ref{lem31} and
integration by parts, we obtain

\begin{eqnarray}\label{g3}
&&\frac{\mu}{2}\frac{d}{dt}\|\nabla v\|^{2} +
  \|v_t\|^{2}\\\nonumber
&&= -(v \cdot \nabla v, v_t) - (\nabla p, v_t) + (E_{jk}\nabla_j
E_{ik}, v_t^i) + (\nabla\cdot E, v_t)\\\nonumber &&\leq \|\nabla
v_t\|\big(\|E\|_{L^{4}}^{2} + \|v\|_{L^{4}}^{2} +
  \|E\|\big) - (E_{ik}\nabla_jE_{jk}, v_t^i)\\\nonumber
&&\leq C\|\nabla v_t\|\big(\|E\|^{1 - \theta(2)}\|\Delta
  E\|^{\theta(2)} + \|v\|^{1 - \theta(1)}\|\nabla v\|^{\theta(1)} +
  \|E\|\big)\\\nonumber
&&\leq \frac{\mu}{8}\|\nabla v_t\|^{2} + g\big(\|v_t\|, \|\nabla
  v\|, \|\Delta E\|\big).
\end{eqnarray}
In order to get the first inequality of the above computation, we
used the constraint on $E$, which is due to the incompressibility,
in Lemma \ref{lem22}.

Next, taking $t$ derivative of the second equation in (\ref{e4}),
and then taking the $L^{2}$ inner product of the resulting
equation with $v_t$, we can apply Lemma \ref{lem31} and
integration by parts to obtain
\begin{eqnarray}\label{g4}
&&\frac{1}{2}\frac{d}{dt}\|v_t\|^{2} + \mu \|\nabla
  v_t\|^{2}\\\nonumber
&&= -\big(\partial_t(v \cdot \nabla v), v_t\big)
  - (\nabla p_t, v_t) + \big(\partial_t(E_{kj}\partial_{k}E_{ij}), v_t^{i}\big)
  + (\partial_t\partial_{j}E_{ij}, v_t^{i})\\\nonumber
&&= -(v_t \cdot \nabla v, v_t)
  - \big(\partial_t(E_{kj}E_{ij}),\partial_{k} v_t^{i}\big) -
  (\partial_tE_{ij}, \partial_{j}v_t^{i})\\\nonumber
&&= (v \otimes v_t, \nabla v_t)
  - \big(\partial_t(EE^T),\nabla v_t\big)
  - (\partial_tE, \nabla v_t)\\\nonumber
&&\leq \|\nabla v_t\|\big(\|v_t\|\|v\|_{L^{\infty}} +
  \|E_t\|\|E\|_{L^{\infty}} +  \|E_t\|\big)\\\nonumber
&&\leq \|\nabla v_t\|\big(\|v_t\|\|v\|^{1 - \theta(2)}\|\Delta
  v\|^{\theta(2)}\\\nonumber
&&\quad +\ \|E_t\|\|E\|^{1 - \theta(2)}\|\Delta E\|^{\theta(2)} +
  \|E_t\|\big)\\\nonumber
&&\leq \frac{\mu}{8}\|\nabla v_t\|^{2} + g\big(\|v_t\|, \|\Delta
  E\|, \|E_t\|, \|\Delta v\|\big).
\end{eqnarray}
On the other hand, from the transport equation of (\ref{e4}) we
have
\begin{eqnarray*}
&&\|E_t\| \leq \|E\|_{L^{\infty}}\|\nabla v\|
+ \|v\|_{L^{\infty}} \|\nabla E\| + \|\nabla v\|\\
&&\leq  C\|E\|^{1 - \theta(2)}\|\Delta E\|^{\theta(2)}\|\nabla v\|
+ \|\nabla v\|\\ &&\quad +\ \|v\|^{1 - \theta(2)}\|\Delta
v\|^{\theta(2)} \big(\|E\| + \|\Delta E\|\big)\\
&&\leq g\big(\|\nabla v\|, \|\Delta E\|, \|\Delta v\|\big).
\end{eqnarray*}
\par Substitute (\ref{g2}) into the above inequality, one has
\begin{equation}\label{g5}
\|E_t\| \leq g\big(\|v_t\|, \|\nabla v\|, \|\Delta E\|\big).
\end{equation}
Plugging (\ref{g2}) and (\ref{g5}) into (\ref{g4}), one arrives at
\begin{equation}\label{g6}
\frac{1}{2}\frac{d}{dt}\|v_t\|^{2} + \frac{7\mu}{8} \|\nabla
v_t\|^{2} \leq g\big(\|v_t\|, \|\nabla v\|, \|\Delta E\|\big).
\end{equation}
\par Noting (\ref{g3}) and (\ref{g6}), it is clear that the key now is the
estimate of  the term $\|\Delta E\|$. It follows from the
transport equation in (\ref{e4}) that
\begin{eqnarray}\label{g7}
&&\frac{1}{2}\frac{d}{dt}\|\Delta E\|^{2}\\ \nonumber&&= -
\big(\Delta (v \cdot \nabla E), \Delta E\big) + \big(\Delta
(\nabla v E), \Delta E\big) + (\nabla\Delta v , \Delta E)\\
\nonumber&&= - \big(\Delta (v \cdot \nabla E) -  v \cdot
\nabla\Delta E, \Delta E\big) + \big(\Delta (\nabla v E), \Delta
E\big) + (\nabla\Delta v , \Delta E)\\\nonumber &&\leq C\|\Delta
E\|\Big(\|\Delta E\|\|\nabla v\|_{L^{\infty}} + \|\Delta
v\|_{L^{4}}\|\nabla E\|_{L^{4}} + \|\nabla\Delta v\|\|
E\|_{L^{\infty}} + \|\nabla\Delta v\|\Big)\\\nonumber &&\leq
C\|\Delta E\|\Big(\|\Delta E\|\|\nabla\Delta v\|^{\theta(2)}\|
v\|^{1 - \theta(2)} + \|\nabla\Delta v\|\|\Delta
E\|^{\theta(2)}\|E\|^{1 - \theta(2)}\\\nonumber &&\quad +\
\|\nabla\Delta v\|^{\theta(1)}\| v\|^{1 - \theta(1)}\|\Delta
E\|^{\theta(1)}\|E\|^{1 - \theta(1)} + \|\nabla\Delta
v\|\Big)\\\nonumber &&\leq g(\|\Delta E\|)\|\nabla\Delta
v\|.\end{eqnarray}

On the other hand, by applying $\nabla$ to the momentum equation
in (\ref{e4}) and then taking the $L^{2}$ inner product of the
resulting equation with $\nabla\Delta v$, we can get
\begin{eqnarray}\nonumber
&&\mu\|\nabla\Delta v\|^{2}\\\nonumber &&= (\nabla v_t,
\nabla\Delta v) + \big( \nabla(v \cdot \nabla v), \nabla\Delta
v\big) + ( \nabla\nabla p, \nabla\Delta v)\\\nonumber &&\quad -\
\big(\nabla(E_{kj}\partial_{k}E_{ij}), \nabla\Delta v^{i}\big) -
(\nabla\partial_{j}E_{ij}, \nabla\Delta v^{i})\\\nonumber &&\leq
C\|\nabla\Delta v\|\Big(\|\nabla v_t\| + \|\Delta
v\|\|v\|_{L^{\infty}} + \|\nabla v\|_{L^{4}}^{2}\\\nonumber
&&\quad +\ \|\Delta E\|\|E\|_{L^{\infty}} + \|\nabla
E\|_{L^{4}}^{2} + \|\Delta E\|\Big)\\\nonumber  &&\leq
C\|\nabla\Delta v\|\Big(\|\nabla v_t\| + \|\Delta v\|\|\Delta
v\|^{\theta(2)}\|v\|^{1 - \theta(2)}+ \|v\|^{2 -
2\theta(1)}\|\Delta v\|^{2\theta(1)} \\\nonumber &&\quad +\
\|\Delta E\|\|\Delta E\|^{\theta(2)}\|E\|^{1 - \theta(2)} +
\|E\|^{2 - 2\theta(1)}\|\Delta E\|^{2\theta(1)} + \|\Delta
E\|\Big).
\end{eqnarray}
Using (\ref{g2}) again, it yields
\begin{equation}\label{g8}
\|\nabla\Delta v\| \leq C\|\nabla v_t\| + g\big(\|v_t\|, \|\nabla
v\|, \|\Delta E\|\big).
\end{equation}
Insert (\ref{g8}) into (\ref{g7}), one concludes that
\begin{equation}\label{g9}
\frac{1}{2}\frac{d}{dt}\|\Delta E\|^{2} \leq \frac{\mu}{8}\|\nabla
v_t\|^{2} + g\big(\|v_t\|, \|\nabla v\|, \|\Delta E\|\big).
\end{equation}
\par Combining (\ref{g3}), (\ref{g6}) with (\ref{g9}), we arrive at
\begin{eqnarray}\label{g10}
\frac{d}{dt}\big(\|\Delta E\|^{2} &+& \mu\|\nabla v\|^{2} +
\|v_t\|^{2}\big) + \big(\mu\|\nabla v_t\|^{2} +
\|v_t\|^{2}\big)\\\nonumber &\leq& g\big(\|v_t\|, \|\nabla v\|,
\|\Delta E\|\big).
\end{eqnarray}
It follows from the momentum equation in (\ref{e4}) that
\begin{equation}\label{g11}
\|v_t(0, x)\| \leq C\big(\|v_{0}\|_{H^{2}},
\|E_{0}\|_{H^{2}}\big).
\end{equation}
(\ref{g10}), (\ref{g11}) and the Gronwall's inequality guarantee
the fact that there exist positive constants $T, M_0$, depending
only on $\|v_{0}\|_{H^{2}}, \|E_{0}\|_{H^{2}}$ such that
\begin{equation}\label{g12}
\big(\|\Delta E\|^{2} + \mu\|\nabla v\|^{2} + \|v_t\|^{2}\big) +
\int_{0}^T\big(\mu\|\nabla v_t\|^{2} + \|v_t\|^{2}\big)\ ds \leq
M_0.
\end{equation}
Returning to (\ref{g2}) and (\ref{g5}), we find that
\begin{equation}\label{g13}
\|\Delta v\| \leq g(M_0),\ \ \|E_t\| \leq g(M_0).
\end{equation}
And recall (\ref{g8}), we can obtain from (\ref{g12}) that
\begin{equation}\label{g14}
\int_{0}^T\|\nabla\Delta v\|^{2}\ ds \leq g(M_0).
\end{equation}
By (\ref{g1}), (\ref{g12})-(\ref{g14}), we conclude that there
exists a sufficiently large positive constant $M$ depending only
on $\|v_{0}\|_{H^{2}}, \|E_{0}\|_{H^{2}}$ such that
\begin{equation}\label{g15}
\big(\|v\|_{H^2}^2 + \|E\|_{H^2}^2 + \|v_t\|^{2} +
\|E_t\|^{2}\big) + \int_{0}^T\big(\|\nabla v\|_{H^2}^{2} +
\|v_t\|_{H^1}^{2}\big)\ ds \leq M.
\end{equation}
We complete the proof of (\ref{f1}) when $k = 2$.

To prove (\ref{f2}), we assume that $T^{*} < \infty$ is the
maximal existence time and
\begin{equation}\label{h1}
\int_{0}^{T^{*}}\|\nabla v\|_{H^{2}}^{2}\ dt < + \infty.
\end{equation}
Go back to (\ref{g7}), we can use Gronwall's inequality to get
\begin{equation}\label{h2}
\|\Delta E\| < + \infty,\ \ \ \ \ 0 \leq t \leq T^{\star}.
\end{equation}
On the other hand, by (\ref{h2}) and the transport equation of $E$
in (\ref{e4}), we have
$$\|E_t\|^2 \leq \big(\|E\|_{L^{\infty}}\|\nabla v\|
+ \|v\|_{L^{\infty}} \|\nabla E\| + \|\nabla v\|\big)^2$$
$$\leq K + \|\nabla v\|_{H^2}^{2},\ \ \ \  \ 0 \leq t \leq T^{*}.\ \ \
\ \ $$ Thus, by (\ref{h1}), we obtain
\begin{equation}\label{h3}
\int_{0}^{T^{*}}\|E_t\|^{2}\ dt < \infty.
\end{equation}

If we  go back to (\ref{g3}), using  (\ref{g1}) and (\ref{h2}), we
have
\begin{eqnarray}\label{h4}
&&\frac{\mu}{2}\frac{d}{dt}\|\nabla v\|^{2} +
  \|v_t\|^{2}\\\nonumber
&&\leq C\|\nabla v_t\|\big(\|E\|^{1 - \theta(2)}\|\Delta
  E\|^{\theta(2)}\\\nonumber
&&\quad +\ \|v\|^{1 - \theta(1)}\|\nabla v\|^{\theta(1)} +
  \|E\|\big)\\\nonumber
&&\leq \frac{\mu}{8}\|\nabla v_t\|^2 + C\|\nabla v\|^2 + C.
\end{eqnarray}

Similarly, by (\ref{g1}) and (\ref{h2}),  (\ref{g4}) will give
\begin{eqnarray}\label{h5}
&&\frac{1}{2}\frac{d}{dt}\|v_t\|^{2} + \mu \|\nabla
  v_t\|^{2}\\\nonumber
&&\leq \|\nabla v_t\|\big(\|v_t\|\|v\|^{1 - \theta(2)}\|\Delta
 v\|^{\theta(2)} \\\nonumber
&&\quad +\ \|E_t\|\|E\|^{1 - \theta(2)}\|\Delta E\|^{\theta(2)} +
  \|E_t\|\big)\\\nonumber
&&\leq \frac{\mu}{8}\|\nabla v_t\|^{2} + C\|v_t\|^2\big(\|\Delta
  v\|^2 + 1\big) + \|E_t\|^2.
\end{eqnarray}
Combining (\ref{h4}) and (\ref{h5}), we have
$$\frac{d}{dt}\big(\mu\|\nabla v\|^{2} + \|v_t\|^{2}\big) + \|v_t\|^{2} +
\mu \|\nabla v_t\|^{2} \leq C\|v_t\|^2\big(\|\Delta v\|^2 + 1\big)
+ \|E_t\|^2.$$

With (\ref{h1}) and (\ref{h3}), we can use Gronwall's inequality
to get
\begin{equation}\label{h6}
\big(\|\nabla v\|^{2} + \|v_t\|^{2}\big) +
\int_{0}^{T^{*}}\big(\|\nabla v_t\|^{2} + \|v_t\|^{2}\big)\ dt < +
\infty.
\end{equation}
Insert (\ref{h2}) and (\ref{h6}) into (\ref{g2}) and (\ref{g5}),
we get
\begin{equation}\label{h7}
\|\Delta v\| < + \infty,\ \ \|E_t\| < + \infty,\ \ \ 0 \leq t \leq
T^{\star}.
\end{equation}
\par Combining (\ref{g1}), (\ref{h1})-(\ref{h2}) and
(\ref{h6})-(\ref{h7}), we get
$$\big(\|v\|_{H^2}^2 + \|E\|_{H^2}^2 + \|v_t\|^{2} + \|E_t\|^{2}\big)
+ \int_{0}^{T^\star}\big(\|\nabla v\|_{H^2}^{2} +
\|v_t\|_{H^1}^{2}\big)\ ds < + \infty,$$ which contradicts with
the assumption that $T^{*}$ is the maximal existence time, which
in turn proves the
equation (\ref{f2})  when $k = 2$.\\

\textbf{Step 2.} Higher order energy estimate.

The proof  for $k \geq 2$ is an induction on $k$. Assume the
theorem is valid for integer $k$. In other words, we have
\begin{equation}\label{ccc}
      \left\{\begin{array}{l@{\quad\ \quad}l}
\partial_t^{i}\nabla^{\alpha}v \in L^{\infty}\big(0, T; H^{k - 2i
- |\alpha|}(\Omega)\big) \cap L^{2}\big(0, T; H^{k - 2i - |\alpha|
+ 1}(\Omega)\big),\\ \partial_t^{i}\nabla^{\alpha}F \in
L^{\infty}\big(0, T; H^{k - 2i - |\alpha|}(\Omega)\big).
\end{array}\right.
\end{equation}
 for all $i, \alpha$ satisfying
$2i + |\alpha| \leq k$, $T$ being determined as that in step 1.
Namely, for all $i, \alpha$ satisfying $2i + |\alpha| \leq k$, we
have
\begin{equation}\label{i1}
\big(\|\partial_t^{i}\nabla^{k - 2i}v\|^{2} +
\|\partial_t^{i}\nabla^{k - 2i}E\|^{2}\big) +
\int_{0}^T\|\partial_t^{i}\nabla^{k + 1 - 2i}v\|^{2}\ dt < +
\infty.
\end{equation}
Here and in what follows the summations are performed over
repeated indices $i$ regardless of their position, as we had
assumed before. Our goal is to prove that the above results are
valid for all $j, \alpha$ satisfying $2j + |\alpha| \leq k + 1$,
which are equivalent to:
\begin{equation}\label{i2}
\big(\|\partial_t^{j}\nabla^{k + 1 - 2j}v\|^{2} +
\|\partial_t^{j}\nabla^{k + 1 - 2j}E\|^{2}\big) +
\int_{0}^T\|\partial_t^{j}\nabla^{k + 2 - 2j}v\|^{2}\ dt < +
\infty.
\end{equation}
where the summation over $j$ is from $0$ to $\frac{k}{2}$ if $k$
is an even number, and from $0$ to $\frac{k + 1}{2}$ if $k$ is an
odd number, respectively.
\par First, we assume that $k$ is an even
number and (\ref{i1}) is satisfied. By applying
$\partial_t^{j}\nabla^{k - 2j}$ to the second equation in
(\ref{e4}), we have
\begin{eqnarray}\label{i3}
&&\partial_t^{j}\nabla^{k - 2j}v_t +
\partial_t^{j}\nabla^{k - 2j}(v \cdot \nabla v) +
\partial_t^{j}\nabla^{k - 2j}\nabla p
\\\nonumber &&= \mu\partial_t^{j}\nabla^{k - 2j}\Delta v +
\partial_t^{j}\nabla^{k - 2j}\nabla \cdot (EE^T)
+ \partial_t^{j}\nabla^{k - 2j}\nabla \cdot E.
\end{eqnarray}

By taking the $L^{2}$ inner product of the equation (\ref{i3})
with $\partial_t^{j + 1}\nabla^{k - 2j}v$, $0 \leq j \leq
\frac{k}{2}$ and using integration by parts, we get
\begin{eqnarray}\label{i4}
&&\frac{\mu}{2}\frac{d}{dt}\|\partial_t^{j}\nabla^{k + 1 -
2j}v\|^{2} + \|\partial_t^{j + 1}\nabla^{k - 2j}v\|^{2}\\\nonumber
&&= -\ \big(\partial_t^{j}\nabla^{k - 2j}(v \cdot \nabla v),
\partial_t^{j + 1}\nabla^{k - 2j}v\big)  - \big(\partial_t^{j}\nabla^{k - 2j}\nabla p,
\partial_t^{j + 1}\nabla^{k - 2j}v\big)\\\nonumber &&\quad +\
\big(\partial_t^{j}\nabla^{k - 2j} \nabla \cdot (EE^T),
\partial_t^{j + 1}\nabla^{k - 2j}v\big)  +\ (\partial_t^{j}\nabla^{k - 2j}\nabla \cdot E,
\partial_t^{j + 1}\nabla^{k - 2j}v)\\\nonumber &&\leq
\|\partial_t^{j + 1}\nabla^{k -
2j}v\|\Big(\|\partial_t^{j}\nabla^{k - 2j}(v \cdot \nabla v)\|
\\\nonumber &&\quad +\ \|\partial_t^{j}\nabla^{k - 2j} \nabla \cdot
(EE^T)\| + \|\partial_t^{j}\nabla^{k - 2j} \nabla \cdot E\|\Big).
\end{eqnarray}

Applying Lemma \ref{lem31},  the induction assumption (\ref{i1})
yields
\begin{eqnarray*}\label{i5a}
&&\|\partial_t^{j}\nabla^{k - 2j}(v \cdot \nabla v)\|\\\nonumber
&&= \|\nabla^k(v \cdot \nabla v)\| + \|\partial_t^{\frac{k}{2}}(v
\cdot \nabla v)\|  + \sum_{0 < j < \frac{k}{2}}
\|\partial_t^{j}\nabla^{k - 2j}(v \cdot \nabla v)\|\\\nonumber
&&\leq \|v\|_{L^\infty}\|\nabla^{k + 1}v\| +
\|v\|_{L^\infty}\|\partial_t^{\frac{k}{2}}\nabla v\| + C\sum_{0 <
l \leq \frac{k}{2}}\|\nabla^lv\|_{L^4}\|\nabla^{k + 1 -
l}v\|_{L^4}
\\\nonumber &&\quad +\ C\sum_{0 < l \leq
\frac{k}{2}}\|\partial_t^lv\|_{L^4}\|\partial_t^{\frac{k}{2} -
l}\nabla v\|_{L^4} + \|v\|_{L^\infty}\|\partial_t^j\nabla^{k - 2j
+ 1}v\| \\\nonumber &&\quad +\ C\sum_{0 < j < \frac{k}{2}, (l, n)
\neq (j, k - 2j)}\|\partial_t^{j - l}\nabla^{k - 2j -
n}v\|_{L^{4}}\|\partial_t^{l}\nabla^{n}\nabla v\|_{L^{4}}.
\end{eqnarray*}
Further computation shows that:
\begin{eqnarray}\label{i5}
&&\|\partial_t^{j}\nabla^{k - 2j}(v \cdot \nabla v)\|\\\nonumber
&&\leq C\big(1 + \|\partial_t^{j}\nabla^{k + 1 - 2j}v\|\big) +
  C\sum_{0 < j < \frac{k}{2}, (l, n) \neq (j, k -
  2j)}\|\partial_t^{j - l}\nabla^{k - 2j - n}v\|^{1 - \theta(1)}
  \\\nonumber
&&\quad\times\ \|\partial_t^{l}\nabla^{n}\nabla v\|^{1 -
  \theta(1)}\cdot\ \|\partial_t^{j - l}\nabla^{k + 1 - 2j -
  n}v\|^{\theta(1)}\|\partial_t^{l}\nabla^{n + 1}\nabla v\|^{\theta(1)}\\\nonumber
&&\leq C\big(1 + \|\partial_t^{j}\nabla^{k + 1 - 2j}v\|\big).
\end{eqnarray}
In a similar way, we also have
\begin{equation}\label{i6}
\|\partial_t^{j}\nabla^{k - 2j} \nabla \cdot (EE^T)\| +
\|\partial_t^{j}\nabla^{k - 2j} \nabla \cdot E\| \leq C\big(1 +
\|\partial_t^{j}\nabla^{k + 1 - 2j}E\|\big).
\end{equation}
Putting these estimates (\ref{i5})-(\ref{i6}) into (\ref{i4}), we
obtain
\begin{eqnarray}\label{i7}
&&\frac{d}{dt}\|\partial_t^{j}\nabla^{k + 1 - 2j}v\|^{2} +
\|\partial_t^{j + 1}\nabla^{k - 2j}v\|^{2} \\\nonumber &&\leq
C\big(1 + \|\partial_t^{j}\nabla^{k + 1 - 2j}v\|^2 +
\|\partial_t^{j}\nabla^{k + 1 - 2j}E\|\big)^{2}.
\end{eqnarray}
\par By taking the $L^{2}$ inner product of the equation
(\ref{i3}) with $\partial_t^{j}\nabla^{k - 2j}\Delta v$, for  $0
\leq j \leq \frac{k}{2}$ and using integration by parts, we get
\begin{eqnarray}\nonumber
&&\mu\|\partial_t^{j}\nabla^{k + 2 - 2j}v\|^{2}\\\nonumber &&=
\big(\partial_t^{j + 1}\nabla^{k - 2j}v, \partial_t^{j }\nabla^{k
- 2j}\Delta v\big)+ \big(\partial_t^{j}\nabla^{k - 2j}(v \cdot
\nabla v),
\partial_t^{j }\nabla^{k - 2j}\Delta v\big)\\\nonumber &&\quad     -
\big(\partial_t^{j}\nabla^{k - 2j} \nabla \cdot (EE^T),
\partial_t^{j }\nabla^{k - 2j}\Delta v\big) -
\big(\partial_t^{j}\nabla^{k - 2j} \nabla \cdot E, \partial_t^{j
}\nabla^{k - 2j}\Delta v\big)\\\nonumber &&\leq
\|\partial_t^{j}\nabla^{k + 2 - 2j}v\|\Big(\|\partial_t^{j +
1}\nabla^{k - 2j}v\| + \|\partial_t^{j}\nabla^{k - 2j}(v \cdot
\nabla v)\|\\\nonumber &&\quad +\ \|\partial_t^{j}\nabla^{k - 2j}
\nabla \cdot (EE^T)\| + \|\partial_t^{j}\nabla^{k - 2j} \nabla
\cdot E\|\Big).
\end{eqnarray}
Noting (\ref{i5}), (\ref{i6}), we get
\begin{eqnarray}\label{i8}
&&\|\partial_t^{j}\nabla^{k + 2 - 2j}v\|^{2} \leq C\big(1 +
\|\partial_t^{j + 1}\nabla^{k - 2j}v\|^{2}\\\nonumber &&\quad +
\|\partial_t^{j}\nabla^{k + 1 - 2j}v\|^{2}  +
\|\partial_t^{j}\nabla^{k + 1 - 2j}E\|^{2}\big).
\end{eqnarray}
\par Applying
$\partial_t^{j}\nabla^{k + 1 - 2j}$ to the third equation in
(\ref{e4}) gives
\begin{equation}\label{i9}
\partial_t^{j}\nabla^{k + 1 - 2j}E_t
+ \partial_t^{j}\nabla^{k + 1 - 2j}(v \cdot \nabla E) =
\partial_t^{j}\nabla^{k + 1 - 2j}(\nabla v E)
+ \partial_t^{j}\nabla^{k + 1 - 2j}\nabla v.
\end{equation}
Now, we take the $L^{2}$ inner product of (\ref{i9}) with
$\partial_t^{j}\nabla^{k + 1 - 2j}E,\ 0\ \le j\ \leq\
\frac{k}{2}$, and use integration by parts:
\begin{eqnarray}\label{i10}
&&\frac{1}{2}\frac{d}{dt}\|\partial_t^{j}\nabla^{k + 1 -
2j}E\|^{2}\\\nonumber&&= \big(\partial_t^{j}\nabla^{k + 1 -
2j}(\nabla v E),
\partial_t^{j }\nabla^{k + 1 - 2j}E\big) + \big(\partial_t^{j}\nabla^{k + 2 -
2j}v , \partial_t^{j }\nabla^{k + 1 - 2j}E\big)\\\nonumber&&\quad
-\ \big(\partial_t^{j}\nabla^{k + 1 - 2j}(v \cdot \nabla E),
\partial_t^{j }\nabla^{k + 1 - 2j}E\big) \\\nonumber
&&\leq \|\partial_t^{j }\nabla^{k + 1 -
2j}E\|\big(\|\partial_t^{j}\nabla^{k + 1 - 2j}(\nabla v E)\| +
\|\partial_t^{j}\nabla^{k + 2 - 2j}v\|\big)\\\nonumber &&\quad +\
\|\partial_t^{j }\nabla^{k + 1 - 2j}E\|\|\partial_t^{j}\nabla^{k +
1 - 2j}(v \cdot \nabla E) - v \cdot \nabla \partial_t^{j}\nabla^{k
+ 1 - 2j}E\|.
\end{eqnarray}
By a similar process as in (\ref{i5}), we can have
\begin{eqnarray}\label{i11}
&&\|\partial_t^{j}\nabla^{k + 1 - 2j}(\nabla v E)\|\\\nonumber
&&\leq \|\partial^{\frac{k}{2}}_t\nabla(\nabla vE)\| + \|\nabla^{k
+ 1}(\nabla vE)\| + \|\partial_t^{j}\nabla^{k + 2 -
2j}v\|\|E\|_{L^\infty}\\\nonumber &&\quad + \ \|\nabla
v\|_{L^\infty}\|\partial_t^{j}\nabla^{k + 1 - 2j}E\|\\\nonumber
&&\quad +\ \sum_{0 < j < \frac{k}{2},}\sum_{(l, n) \neq (0, 0),
(j, k + 1 - 2j)} \|\partial_t^{j - l}\nabla^{k + 2 - 2j -
n}v\|_{L^4}\|\partial_t^l\nabla^nE)\|_{L^4}\\\nonumber &&\leq C(1
+ \|\nabla\Delta v\|)\big(\|\partial_t^{j}\nabla^{k + 2 - 2j}v\| +
\|\partial_t^{j}\nabla^{k + 1 - 2j}E\|\big).
\end{eqnarray}
On the other hand, we can estimate the last line of (\ref{i10}) as
follows
\begin{eqnarray}\label{i12}
&&\|\partial_t^{j}\nabla^{k + 1 - 2j}(v \cdot \nabla E) - v \cdot
\nabla \partial_t^{j}\nabla^{k + 1 - 2j}E\| \\\nonumber &&\leq
\|\nabla v\|_{L^\infty}\|\nabla\partial_t^{j}\nabla^{k -
2j}E\|\\\nonumber &&\quad +\ C\sum_{(l, n) \neq}\sum_{(j, k - 2j),
(j, k + 1 - 2j)}\|\partial_t^{j - l}\nabla^{k + 1 - 2j -
n}v\|_{L^4}\|\partial_t^l\nabla^{n + 1}E)\|_{L^4}\\\nonumber &&
\leq C(1 + \|\nabla\Delta v\|)\big(\|\partial_t^{j}\nabla^{k + 2 -
2j}v\| + \|\partial_t^{j}\nabla^{k + 1 - 2j}E\|\big).
\end{eqnarray}

Combining (\ref{i8}) with (\ref{i10})-(\ref{i12}), we have
\begin{eqnarray}\label{i13}
&&\frac{d}{dt}\|\partial_t^{j}\nabla^{k + 1 -
2j}E\|^{2}\\\nonumber &&\leq C(1 + \|\nabla\Delta
v\|)\big(\|\partial_t^{j}\nabla^{k + 1 - 2j}v\|^{2} +
\|\partial_t^{j}\nabla^{k + 1 - 2j}E\|^{2}\big)\\\nonumber &&
 +\ \frac{1}{2}\|\partial_t^{j + 1}\nabla^{k - 2j}v\|^{2}.
\end{eqnarray}
Combining this formula (\ref{i13}) with (\ref{i7}), we can
conclude that
$$\frac{d}{dt}\big(\|\partial_t^{j}\nabla^{k + 1 - 2j}v\|^{2} +
\|\partial_t^{j}\nabla^{k + 1 - 2j}E\|^{2}\big) + \|\partial_t^{j
+ 1}\nabla^{k - 2j}v\|^{2}$$$$ \leq C\big(1 + \|\nabla\Delta
v\|)(\|\partial_t^{j}\nabla^{k + 1 - 2j}v\|^{2} +
\|\partial_t^{j}\nabla^{k + 1 - 2j}E\|^{2}\big) + C.\ \ $$ Noting
that $\int_{0}^T|\nabla v|_{L^{\infty}}\ dt < \infty$, we can
apply Gronwall's inequality to get
\begin{eqnarray}\label{i14}
&&\big(\|\partial_t^{j}\nabla^{k + 1 - 2j}v\|^{2} +
\|\partial_t^{j}\nabla^{k + 1 - 2j}E\|^{2}\big) \\\nonumber
&&\quad + \int_{0}^t\|\partial_t^{j + 1}\nabla^{k - 2j}v\|^{2}\ ds
\leq M,\ \ \ 0 \leq t \leq T
\end{eqnarray}
where $M$ depending only on $\|v_{0}\|_{H^{k + 1}}$ and
$\|E_{0}\|_{H^{k + 1}}$. Moreover, by (\ref{i8}), we have
\begin{equation}\label{i15}
\int_{0}^T\|\partial_t^{j}\nabla^{k + 2 - 2j}v\|^{2}\ ds \leq M.
\end{equation}
Together, (\ref{i14}) and (\ref{i15}) imply (\ref{i2}) when $k$ is
an even number.

We now assume that $k$ is an odd number and $k \geq 3$. Applying
$\partial_t^{j}\nabla^{k + 1 - 2j}$ to the second and third
equation of (\ref{e4}), we have
\begin{equation}\label{j1}
      \left\{\begin{array}{l@{\quad\ \quad}l}
\partial_t^{j}\nabla^{k + 1 - 2j}v_t
+ \partial_t^{j}\nabla^{k + 1 - 2j}(v \cdot \nabla v) +
\partial_t^{j}\nabla^{k + 1 - 2j}\nabla p \\
\quad = \mu\partial_t^{j}\nabla^{k + 1 - 2j}\Delta v  +\
\partial_t^{j}\nabla^{k + 1 - 2j}\nabla \cdot (EE^T)
+ \partial_t^{j}\nabla^{k + 1 - 2j}\nabla \cdot E,\\
\partial_t^{j}\nabla^{k + 1 - 2j}E_t +
\partial_t^{j}\nabla^{k + 1 - 2j}(v \cdot \nabla E) =
\partial_t^{j}\nabla^{k + 1 - 2j}(\nabla v E)\\
\quad + \partial_t^{j}\nabla^{k + 1 - 2j}\nabla v.
\end{array}\right.
\end{equation}

Now we take the $L^{2}$ inner product of the first equation in the
system (\ref{j1}) with $\partial_t^{j}\nabla^{k + 1 - 2j}v,$ where
$ 0 \leq j \leq \frac{k + 1}{2}$,  integration by parts yields the
following
\begin{eqnarray*}
&&\frac{1}{2}\frac{d}{dt}\|\partial_t^{j}\nabla^{k + 1 -
2j}v\|^{2}
+ \mu\|\partial_t^{j}\nabla^{k + 2 - 2j}v\|^{2}\\
&&= -\big(\partial_t^{j}\nabla^{k + 1 - 2j}(v \cdot \nabla v),
\partial_t^{j}\nabla^{k + 1 - 2j}v\big) \\&&\quad +\ \big(\partial_t^{j}\nabla^{k +
1 - 2j} \nabla \cdot (EE^T), \partial_t^{j}\nabla^{k + 1 -
2j}v\big)
\\&&\quad +\ \big(\partial_t^{j}\nabla^{k + 1 - 2j}
\nabla \cdot E, \partial_t^{j}\nabla^{k + 1 - 2j}v\big)\\
&&\leq \|\partial_t^{j}\nabla^{k + 2 -
2j}v\|\Big(\|\partial_t^{j}\nabla^{k + 1 - 2j}(v \otimes v)\| +
\|\partial_t^{j}\nabla^{k + 1 - 2j}(EE^T)\| \\&&\quad +\
\|\partial_t^{j}\nabla^{k + 1 - 2j}E\|\Big)\\
&&\leq C\|\partial_t^{j}\nabla^{k + 2 - 2j}v\|\Big(1 +
\|\partial_t^{j}\nabla^{k + 1 - 2j}v\| + \|\partial_t^{j}\nabla^{k
+ 1 - 2j}E\|\Big)
\end{eqnarray*}
where we used Lemma \ref{lem31} and the induction assumption. In
summary, we have
\begin{eqnarray}\label{j2}
&&\frac{d}{dt}\|\partial_t^{j}\nabla^{k + 1 - 2j}v\|^{2} +
\mu\|\partial_t^{j}\nabla^{k + 2 - 2j}v\|^{2}\\\nonumber
 &&\leq C\big(1
+ \|\partial_t^{j}\nabla^{k + 1 - 2j}v\|^{2} +
\|\partial_t^{j}\nabla^{k + 1 - 2j}E\|^{2}\big).
\end{eqnarray}

Similarly, we will take the $L^{2}$ inner product of second
equation of (\ref{j1}) with $\partial_t^{j}\nabla^{k + 1 - 2j}E, 0
\leq j \leq \frac{k + 1}{2}$ and use integration by parts. The
similar derivations as in (\ref{i12}) will give us
\begin{eqnarray*}
&&\frac{1}{2}\frac{d}{dt}\|\partial_t^{j}\nabla^{k + 1 -
2j}E\|^{2}\\
&&= -\big(\partial_t^{j}\nabla^{k + 1 - 2j}(v \cdot \nabla E),
\partial_t^{j}\nabla^{k + 1 - 2j}E\big) + \big(\partial_t^{j}\nabla^{k + 1 -
2j}
(\nabla v E), \partial_t^{j}\nabla^{k + 1 - 2j}E\big)\\
&&\quad +\ \big(\partial_t^{j}\nabla^{k + 1 - 2j}
\nabla v, \partial_t^{j}\nabla^{k + 1 - 2j}E\big)\\
&&\leq \|\partial_t^{j}\nabla^{k + 1 -
2j}E\|\Big(\|\partial_t^{j}\nabla^{k + 1 - 2j}(v \cdot \nabla E) -
v \cdot
\partial_t^{j}\nabla^{k + 1 - 2j}\nabla E\|\\
&&\quad + \|\partial_t^{j}\nabla^{k + 1 - 2j}(\nabla v E)\| +
\|\partial_t^{j}\nabla^{k + 2 - 2j}v\|\Big)\\
&&\leq C\|\partial_t^{j}\nabla^{k + 1 - 2j}E\|\Big(1 + (1 +
|\nabla
E|_{L^{\infty}})\|\partial_t^{j}\nabla^{k + 1 - 2j}v\|\\
&&\quad +\ (1 + |\nabla v|_{L^{\infty}})\|\partial_t^{j}\nabla^{k
+ 1 - 2j}E\| + \|\partial_t^{j}\nabla^{k + 2 - 2j}v\|\Big).
\end{eqnarray*}
Employing the induction assumption, we have
\begin{eqnarray}\label{j3}
&&\frac{1}{2}\frac{d}{dt}\|\partial_t^{j}\nabla^{k + 1 -
2j}E\|^{2}\\\nonumber &&\leq C\big(1 + \|\partial_t^{j}\nabla^{k +
1 - 2j}v\|^{2} + \|\partial_t^{j}\nabla^{k + 1 - 2j}E\|^{2}\big) +
\frac{\mu}{4}\|\partial_t^{j}\nabla^{k + 2 - 2j}v\|^{2}.
\end{eqnarray}
\par Combining (\ref{j2}) with (\ref{j3}), we obtain:
\begin{eqnarray*}
&&\frac{d}{dt}\big(\|\partial_t^{j}\nabla^{k + 1 - 2j}v\|^{2} +
\|\partial_t^{j}\nabla^{k + 1 - 2j}E\|^{2}\big) +
\mu\|\partial_t^{j}\nabla^{k + 2 - 2j}v\|^{2}\\ &&\leq C\Big(1 +
\|\partial_t^{j}\nabla^{k + 1 - 2j}v\|^{2} +
\|\partial_t^{j}\nabla^{k + 1 - 2j}E\|^{2}\Big).
\end{eqnarray*}

Again, we use the Gronwall's inequality to deduce that
\begin{eqnarray}\label{j4}
&&\big(\|\partial_t^{j}\nabla^{k + 1 - 2j}v\|^{2} +
\|\partial_t^{j}\nabla^{k + 1 - 2j}E\|^{2}\big)\\\nonumber &&\quad
+\ \int_{0}^t\|\partial_t^{j}\nabla^{k + 2 - 2j}v\|^{2}\ ds \leq
M,\ \ \ 0 \leq t \leq T.
\end{eqnarray}
This conclude the proof of (\ref{i2}) when $k$ is odd.

Putting all these results (\ref{i14}), (\ref{i15}) and (\ref{j4})
together, we have  proved  (\ref{f1}) and completed the proof of
Theorem \ref{thm31}.

\end{proof}

\section{Global Existence}

We now turn our attention to  the proof of  the global existence
of classical solution for system (\ref{e4}). A weak dissipation on
the deformation $F$ is found by introducing an auxiliary function
$w$ below. The way of defining such a function reveals the
intrinsic dissipative nature of the system.

To avoid complications at the boundary, we only present the
periodic case $\Omega = \mathbf{T}^n$ and the whole space case
$\Omega = R^n$. In fact, the case of smooth bounded domain can
also be treated at a more lengthy, but no more difficult procedure
than the proofs presented here.

 Unlike those previous results in viscoelastic literature
\cite{Lions,Renardy,Renardy1}, the main difficulty lies in  the
apparent partial dissipation structure of the system (\ref{e4}).

On the other hand, it also lacks the property of scaling
invariance. The presence of viscosity on $v$ gives a big obstacle
to utilize the combination of Klainerman's generalized energy
estimates and weighted $L^2$ estimates
\cite{Klainerman2,Klainerman4,Sideris1,Sideris2,Sideris3}.

The main contribution of our work is to reveal the fact that the
incompressibility of system (\ref{e4}) will provide us enough
information for the proof of the near-equilibrium global existence
of classical solutions.

In the 3-D cases, the term $\nabla \times E$ is in fact a high
order term!  We recover the results obtained in \cite{Lin3}, where
we avoided using this fact by the introduction of the auxiliary
vector $\phi$ and then $\det\nabla\phi = 1$ is enough to prove the
near-equilibrium global existence of classical solutions in 2-D
case.

We start the proof by applying $\Delta$ to the transport equation
in (\ref{e4}) and then taking the $L^{2}$ inner product of the
resulting equation with $\Delta E$,
\begin{eqnarray}\label{k2}
&&\frac{1}{2}\frac{d}{dt}\|\Delta E\|^{2} - (\Delta\nabla v ,
  \Delta E)\\\nonumber
&&= -\ \big(\Delta(v \cdot \nabla E), \Delta E\big) +
  \big(\Delta(\nabla v E), \Delta E\big)\\\nonumber
&&\leq C\|\Delta E\|\Big(\|\Delta E\||\nabla v|_{L^{\infty}} +
  \|\Delta v\|_{L^{4}}\|\nabla E\|_{L^{4}} +
  \|\nabla\Delta v\||E|_{L^{\infty}}\Big)\\\nonumber
&&\leq C\|\Delta E\|^{2}\big(\|\nabla v\| + \|\nabla\Delta
  v\|\big) + C\|\Delta E\|\|\nabla\Delta
  v\|\|E\|_{H^2}\\\nonumber
&&\quad +\ C\|\Delta E\|\big(\|\nabla v\| + \|\nabla\Delta
  v\|\big)\big(\|\Delta E\| + \|E\|\big)\\\nonumber
&&\leq C\|E\|_{H^{2}}\|\Delta E\|\big(\|\nabla v\|+ \|\nabla\Delta
  v\|\big)\\\nonumber
&&\leq \ C\|E\|_{H^{2}}\Big(\|\Delta E\|^2 + \|\nabla v\|^2 +
  \|\nabla\Delta v\|^2\Big).
\end{eqnarray}

Next we apply $\Delta$ to the momentum equation in (\ref{e4}) and
then take the $L^{2}$ inner of the resulting equation with $\Delta
v$ to deduce that
\begin{eqnarray}\label{k3}
&&\frac{1}{2}\frac{d}{dt}\|\Delta v\|^{2} + \mu\|\nabla\Delta
  v\|^{2}\\\nonumber
&&= -\big(\Delta(v \cdot \nabla v), \Delta v\big) +
  \big(\Delta\nabla \cdot(EE^T), \Delta v\big) + \big(\Delta\nabla
  \cdot E, \Delta v\big)\\\nonumber
&&\leq C\|\Delta v\|\|\Delta v\||\nabla v|_{L^{\infty}} +
  C\|\Delta E\||E|_{L^{\infty}}\|\nabla\Delta v\| - (\Delta E,
  \nabla\Delta v)\\\nonumber
&&\leq C\big(\|v\|_{H^{2}} + \|E\|_{H^{2}}\big)\Big(\|\nabla
  v\|^{2} + \|\nabla\Delta v\|^{2} + \|\Delta
  E\|^2\Big)\\\nonumber
&&\quad  - (\Delta E, \nabla\Delta v),
\end{eqnarray}
where in the first inequality, we used Proposition \ref{prop32}.

Combining (\ref{k2}) with (\ref{k3}), we arrive at
\begin{eqnarray}\label{k4}
&&\frac{1}{2}\frac{d}{dt}\Big(\|\Delta v\|^{2} + \|\Delta
  E\|^{2}\Big) + \mu\|\nabla\Delta v\|^{2}\\\nonumber
&&\leq C\big(\|v\|_{H^{2}} + \|E\|_{H^{2}}\big)\Big(\|\nabla
  v\|^{2} + \|\nabla\Delta v\|^{2} + \|\Delta E\|^2\Big).
\end{eqnarray}
\par  In order to extract the dissipative nature of the system,
we want to combine the linear terms on the right hand side of the
momentum equation in (\ref{e4}).  We introduce the auxiliary
variable $w$ as follows:
\begin{equation}\label{k5}
w = \Delta v + \frac{1}{\mu}\nabla\cdot E.
\end{equation}
The system (\ref{e4}) will give the reformed equation:
\begin{eqnarray}
w_t &+& \Delta(v \cdot \nabla v) +
\frac{1}{\mu}\nabla\cdot(v\cdot\nabla E) + \Delta\nabla
p\\\nonumber
 &=& \mu\Delta w + \Delta\nabla\cdot (EE^T) +
\frac{1}{\mu}\nabla\cdot(\nabla v E) + \frac{1}{\mu}\Delta v.
\end{eqnarray}

By taking the $L^{2}$ inner product of the resulting equation with
$w$, we find
\begin{eqnarray}\label{k6}
&&\frac{1}{2}\frac{d}{dt}\|w\|^{2} + \mu\|\nabla
  w\|^{2}\\\nonumber
&&= - \big(\Delta(v \cdot \nabla v) + \frac{1}{\mu}
  \nabla\cdot(v\cdot\nabla E), w\big)\\\nonumber
&&\quad -\ (\Delta\nabla p, w) + \frac{1}{\mu}(\Delta v, w)
  \\\nonumber
&&\quad +\ \frac{1}{\mu}\big(\nabla\cdot(\nabla v E), w)\big) +
  \big(\Delta\nabla \cdot (EE^T), w\big).
\end{eqnarray}

Now let us estimate the right side of (\ref{k6}) term by term.
First of all, the first term can be estimated as
\begin{eqnarray}\label{k7}
&&\big|- \big(\Delta(v \cdot \nabla v) +
  \frac{1}{\mu}\nabla\cdot(v\cdot\nabla E), w\big)\big|\\\nonumber
&&\leq \big|(v\cdot\nabla w, w)\big| + \big|(\Delta(v\cdot\nabla
v) -
  v\cdot\nabla\Delta v, w)\big|\\\nonumber
&&\quad +\ \frac{1}{\mu}\big|\big(\nabla\cdot(v\cdot\nabla E) -
  v\cdot\nabla \nabla\cdot E, w\big)\big|\\\nonumber
&&\leq \big|(\Delta(v\otimes v) -
  v\otimes\Delta v, \nabla w)\big|\\\nonumber
&&\quad +\ \frac{1}{\mu}\big|\big(\nabla\cdot(v\otimes E) -
  v\otimes \nabla\cdot E, \nabla w\big)\big|\\\nonumber
&&\leq C\Big(\|\nabla v\|_{L^{4}}^2 + \|\Delta v\|\|v\|_{L^\infty}
  + \frac{1}{\mu}\|\nabla v\|\|E\|_{L^\infty}\Big)\|\nabla
  w\|\\\nonumber
&&\leq C\big(1 + \frac{1}{\mu}\big)\big(\| v\|_{H^{2}} + \|
  E\|_{H^{2}}\big)\Big(\|\nabla w\|^{2} + \|\nabla v\|^{2} + \|\nabla\Delta
  v\|^{2}\Big).
\end{eqnarray}
Next, we estimate the last term on the right hand side of
(\ref{k6}) as follows:
\begin{eqnarray}\label{k8}
&&\Big|\frac{1}{\mu}\big(\nabla\cdot(\nabla v E), w)\big) +
  \big(\Delta\nabla \cdot (EE^T), w\big)\Big|\\\nonumber
&&\leq C\|\nabla w\|\|E\|_{L^{\infty}}\big(\|\Delta E\| +
  \frac{1}{\mu}\|\nabla v\|\big)\\\nonumber
&&\leq C\big(1 + \frac{1}{\mu}\big)\| E\|_{H^{2}}\Big(\|\nabla
w\|^{2} + \|\nabla
  v\|^{2} + \|\Delta E\|^2\Big).
\end{eqnarray}
Here we used Proposition \ref{prop32}.

It is rather easy to get
\begin{equation}\label{k9}
\Big|\frac{1}{\mu}(\Delta v, w)\Big| \leq \frac{\mu}{4}\|\nabla
w\|^2 + \frac{C}{\mu^3}\|\nabla v\|^2.
\end{equation}

At last, let us estimate the term $(\nabla\Delta p, w)$. Noting
that $\nabla \cdot v = 0$ and (\ref{b4}), by applying the
divergence operator to the momentum equation of (\ref{e4}), we get
$$\Delta p =  \nabla_jE_{ik}\nabla_iE_{jk} -  \nabla_iv_j\nabla_j
v_i.$$ By Lemma \ref{lem31}, we have
\begin{equation}\label{80}
\|\nabla E\|_{L^{4}}^{2} \leq \left\{\begin{array}{l@{\quad\
  \quad}l}
  \|E\|^{\frac{1}{2}}\|\Delta E\|^{\frac{3}{2}}
  \leq \|E\|_{H^{2}}\|\Delta E\|,
  & \mbox{in}\quad R^2,\\\nonumber
  \|E\|^{\frac{1}{4}}\|\Delta E\|^{\frac{7}{4}}
  \leq \|E\|_{H^{2}}\|\Delta E\| & \mbox{in}\quad R^3,
\end{array}\right.
\end{equation}
This gives us the following estimates:
\begin{eqnarray}\label{k10}
&&\big|(\Delta\nabla p, w)\big| \leq \|\nabla w\|\Big(\|\nabla
E\|_{L^{4}}^{2} + \|\nabla v\|_{L^{4}}^{2}\Big)\\\nonumber &&\leq
C\big(\|E\|_{H^{2}} + \|v\|_{H^{2}}\big)\Big(\|\nabla w\|^{2} +
\|\nabla\Delta v\|^{2} + \|\nabla v\|^{2} + \|\Delta E\|^2\Big).
\end{eqnarray}

Combining all the above tedious but standard estimates
(\ref{k6})-(\ref{k10}) together, we arrive at the following
important energy inequality for the auxiliary variable $w$:
\begin{eqnarray}\label{k11}
&&\frac{d}{dt}\|w\|^{2} + \mu\|\nabla w\|^{2}\\\nonumber &&\leq
\frac{C}{\mu^3}\|\nabla v\|^2 + C\big(1 +
\frac{1}{\mu}\big)\big(\| v\|_{H^{2}} + \|
E\|_{H^{2}}\big)\\\nonumber &&\quad \times\Big(\|\Delta E\|^{2} +
\|\nabla v\|^{2} + \|\nabla\Delta v\|^{2} + \|\nabla w\|^2\Big).
\end{eqnarray}

The key here is to estimate the tern $\Delta E$.  Recall the Hodge
decomposition
$$\Delta E = \nabla\nabla\cdot E - \nabla\times\nabla\times E.$$
Here is the place that we will use (\ref{b5}) and (\ref{k5}) to
obtain the following estimate:
\begin{eqnarray}
&&\|\Delta E\|^2 = \|\nabla\nabla\cdot E\|^2 +
\|\nabla\times\nabla\times E\|^2\\\nonumber &&\leq
2\mu^2\Big(\|\nabla w\|^2 + \|\nabla\Delta v\|^2\Big) +
\|\nabla\times\nabla\times E\|^2\\\nonumber &&\leq
2\mu^2\Big(\|\nabla w\|^2 + \|\nabla\Delta v\|^2\Big) +
C\|E\|_{H^2}^2\|\Delta E\|^2,
\end{eqnarray}
which gives us the bound
\begin{equation}\label{k12}
\|\Delta E\|^2 \leq C\mu^2\Big(\|\nabla w\|^2 + \|\nabla\Delta
v\|^2\Big)
\end{equation}
provided $\|E\|_{H^2} \leq \frac{1}{\sqrt{2C}}$.

\par With the above result, we are ready to employ the same method
as that in \cite{Lin3} to prove the global existence results.
Combining (\ref{k4}), (\ref{k11}) with (\ref{k12}), we finally
arrive at
\begin{eqnarray*}
&&\frac{d}{dt}\Big(\|w\|^{2} + \|\Delta E\|^{2} + \|\Delta
v\|^{2}\Big) +
\mu\Big(\|\nabla w\|^{2} + \|\nabla\Delta v\|^{2}\Big)\\
&&\leq C\big(\mu^2 + \frac{1}{\mu}\big)\big(\| v\|_{H^{2}} + \|
E\|_{H^{2}}\big)\Big(\|\nabla w\|^{2} + \|\nabla v\|^{2} +
\|\nabla\Delta v\|^{2}\Big) + \frac{C}{\mu^3}\|\nabla
v\|^2.\end{eqnarray*} Thus, if the initial data is sufficiently
small, we can find some $T^\star
> 0$, such that
\begin{equation}\label{k13}
\| v\|_{H^{2}} + \| E\|_{H^{2}} \leq \frac{\mu^2}{2C(\mu^3 + 1)}
\end{equation}
for all $0 \leq t \leq T^\star$. Moreover, in this case,
\begin{eqnarray}
&&\Big(\|w\|^{2} + \|\Delta E\|^{2} + \|\Delta v\|^{2}\Big)(t) +
\mu\int_0^t\Big(\|\nabla w\|^{2} + \|\nabla\Delta
v\|^{2}\Big)d\tau\\\nonumber &&\leq C\big(\mu^2 +
\frac{1}{\mu^2}\big)\Big(\|v_0\|_{H^{2}}^2 + \|
E_0\|_{H^{2}}^2\Big) + \frac{C}{\mu^3}\int_0^\infty\|\nabla
v\|^2dt
\end{eqnarray}
holds for all $0 \leq t \leq T^\star$. Noting the original basic
energy law (\ref{g1}), we have
\begin{equation}\label{k14}
\Big(\|E\|_{H^2}^{2} + \|v\|_{H^2}^{2}\Big)(t) +
\mu\int_0^t\|\nabla v\|^{2}_{H^2}d\tau \leq C\big(\mu^2 +
\frac{1}{\mu^4}\big)\Big(\|v_0\|_{H^{2}}^2 + \|
E_0\|_{H^{2}}^2\Big)
\end{equation}
holds for all $0 \leq t \leq T^\star$. (\ref{k13}) (\ref{k14})
imply that if
\begin{equation}
\|v_{0}\|_{H^{2}}^2 + \|E_{0}\|_{H^{2}}^2 < \frac{\mu^8}{8C^3(1 +
\mu^6)(1 + \mu^3)^2}, \end{equation} then (\ref{k13}) is still
true with $\leq$ being replaced by $<$ for all $0 \leq t \leq
T^\star$, which implies that (\ref{k13}) is true for all the
latter time with the uniform constant $C$ independent of $t$ and
$\mu$. Moreover, from (\ref{k14}), we have
$$\|E\|_{H^2}^{2} + \|v\|_{H^2}^{2} +
\mu\int_0^\infty\|\nabla v\|^{2}_{H^2}dt \leq
\frac{\mu^2}{2C(\mu^3 + 1)}.$$ This together with the local
theorem \ref{thm31} gives the following global existence of
near-equilibrium classical solutions for system
(\ref{e4}).\\

Finally, we state the theorem in the lightly more general cases.
The proof is exactly the same as the case of (\ref{e4}).

\begin{theorem}\label{thm41}
Consider the viscoelastic model (\ref{a1}) with the initial data
(\ref{a2}) in the whole space $R^n$ or n-dimensional torus $T^n$,
for $n = 2, 3$. Suppose that the initial data satisfies the
incompressible constraint (\ref{a3}), and the strain energy
function satisfies the strong Legendre-Hadamard ellipticity
condition (\ref{e2}) and the reference configuration stress free
condition (\ref{w}). Then there exists a unique global classical
solution for system (\ref{a1}) which satisfies
$$\|E\|_{H^2}^{2} + \|v\|_{H^2}^{2} +
\mu\int_0^\infty\|\nabla v\|^{2}_{H^2}dt \leq
\frac{\mu^2}{2C(\mu^3 + 1)}$$ if the initial data $v_0,\ E_0 \in
H^k(\Omega)$ and satisfies the condition:
$$\|v_{0}\|_{H^{2}}^2 + \|E_{0}\|_{H^{2}}^2
< \frac{\mu^8 }{M(1 + \mu^{12})},$$ where $k$ is an integer and $k
\geq 2$, $M> 8C^3$ is a large enough constant.
\end{theorem}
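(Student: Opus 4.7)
The plan is to observe that the work already completed in Section 4 essentially is the proof in disguise: the calculation there establishes the stated estimate for the Hookean model (\ref{e4}), and the only task that remains is to verify that the argument transfers verbatim to the general isotropic strain energy satisfying (\ref{e2}) and (\ref{w}). By the identity (\ref{e3}) the momentum equation for a general isotropic $W$ becomes
\begin{equation*}
v_t + v\cdot\nabla v + \nabla p = \mu\Delta v + \beta^2\,\nabla\cdot E + \nabla\cdot f_3(E),
\end{equation*}
where $f_3(E)$ is a smooth matrix-valued function vanishing to second order at $E = 0$. After normalizing $\beta = 1$, this is structurally identical to the equation in (\ref{e4}) except that the explicit quadratic term $\nabla\cdot(EE^T)$ is replaced by the more general $\nabla\cdot f_3(E)$. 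Proposition \ref{prop31} and Proposition \ref{prop32} handle the corresponding nonlinear estimates: under the a priori smallness of $\|E\|_{H^2}$, one has $\|f_3(E)\|_{H^2} \leq C\|E\|_{H^2}^2$ and the same bound on each of its derivatives, so every occurrence of $EE^T$ in Section 4 can be replaced by $f_3(E)$ without changing the final form of the inequalities.

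The core argument would then be reproduced step by step. First, Theorem \ref{thm31} furnishes the local classical solution on some $[0,T^\star)$ together with the blow-up criterion (\ref{f2}); Lemmas \ref{lem21}, \ref{lem22}, and \ref{lem23} propagate the incompressibility constraints (\ref{a3}) for all $t\in[0,T^\star)$. Next, one performs the $H^2$ energy estimates (\ref{k2})--(\ref{k4}) on $(v,E)$, followed by the estimate (\ref{k11}) for the auxiliary variable $w = \Delta v + \frac{1}{\mu}\nabla\cdot E$ defined in (\ref{k5}); the generalized nonlinear term $\nabla\cdot f_3(E)$ enters only in place of $\nabla\cdot(EE^T)$ and is controlled in the same fashion via Propositions \ref{prop31}--\ref{prop32}. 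The crucial dissipative information on $E$ is then extracted by combining the Hodge decomposition $\Delta E = \nabla\nabla\cdot E - \nabla\times\nabla\times E$ with (\ref{b5}) of Lemma \ref{lem23}, which forces $\nabla\times E$ to be quadratic in $E$ and thus produces the bound (\ref{k12}), namely
\begin{equation*}
\|\Delta E\|^2 \leq C\mu^2\bigl(\|\nabla w\|^2 + \|\nabla\Delta v\|^2\bigr)\qquad\text{whenever}\ \|E\|_{H^2}\leq \tfrac{1}{\sqrt{2C}}.
\end{equation*}

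Finally, assembling these pieces yields a closed differential inequality of the form
\begin{equation*}
\frac{d}{dt}\bigl(\|w\|^2 + \|\Delta E\|^2 + \|\Delta v\|^2\bigr) + \mu\bigl(\|\nabla w\|^2 + \|\nabla\Delta v\|^2\bigr) \leq C\bigl(\mu^2 + \tfrac{1}{\mu}\bigr)(\|v\|_{H^2} + \|E\|_{H^2})(\dots) + \tfrac{C}{\mu^3}\|\nabla v\|^2,
\end{equation*}
and together with the basic energy law (\ref{g1}) this closes into the estimate (\ref{k14}). A standard continuation/bootstrap argument then does the rest: choosing the initial data with $\|v_0\|_{H^2}^2 + \|E_0\|_{H^2}^2 < \mu^8/(M(1+\mu^{12}))$ for sufficiently large $M$ makes (\ref{k13}) strict at $t=0$, the energy inequality forces it to remain strict on the whole existence interval, and the blow-up criterion (\ref{f2}) from Theorem \ref{thm31} then forbids finite time singularity, yielding the global solution and the stated bound.

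The principal obstacle I anticipate is the one already isolated by the author: ensuring that the auxiliary-variable dissipation inequality (\ref{k12}) closes, which requires the continuation argument to keep $\|E\|_{H^2}$ strictly below the critical threshold $1/\sqrt{2C}$ throughout the evolution. This in turn hinges on the tension between the lossy factor $\mu^2 + 1/\mu$ multiplying the top-order energy on the right and the dissipation coefficient $\mu$ on the left; tracking the explicit $\mu$-dependence of the constants (hence the appearance of the sharp factor $\mu^8/(1+\mu^{12})$ in the smallness condition) is the delicate bookkeeping part of the argument, although it is purely algebraic once the preceding steps are in place.
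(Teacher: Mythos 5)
Your proposal follows essentially the same route as the paper: the paper proves the estimate in detail for the Hookean system (\ref{e4}) via the auxiliary variable $w=\Delta v+\frac{1}{\mu}\nabla\cdot E$, the Hodge decomposition combined with Lemma \ref{lem23} to obtain (\ref{k12}), and a continuation argument, and then asserts that the general isotropic case under (\ref{e2}) and (\ref{w}) is handled identically through the expansion (\ref{e3}), exactly as you describe. Your outline correctly identifies all the key ingredients, including the $\mu$-dependent bookkeeping that produces the threshold $\mu^{8}/(M(1+\mu^{12}))$, so it matches the paper's argument.
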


\section{Incompressible Limits}
In numerical simulations and physical applications, one often
views the incompressible system as an approximation of the
compressible equations when the Mach number is small enough. Thus,
it is of interest to see whether the solution to the
incompressible system can be obtained as the incompressible limit
of  the corresponding compressible system. Moreover,
incompressible limit is also very important in the mathematical
understanding of different hydrodynamical systems and has been
extensively studied \cite{Klainerman3,Lei1,Lei3,Sideris3}.

The corresponding compressible viscoelastic system takes of the
following form:
\begin{equation}\label{l1}
      \left\{\begin{array}{l@{\quad\ \quad}l}
\partial_{t}\rho + v \cdot \nabla\rho + \rho\nabla\cdot v  = 0,\\
\partial_{t}v + v \cdot \nabla v +
\lambda^{2}\frac{p^{\prime}(\rho)}{\rho}\nabla\rho
 = \frac{\mu}{\rho}\big(\Delta v + \nabla(\nabla\cdot v)\big) +
\frac{1}{\rho}\nabla\cdot(\rho FF^{T}),\\
\partial_{t}F + v \cdot\nabla F = \nabla uF.
\end{array}\right.
\end{equation}
where $p(\rho)$ is a given equation of state independent of the
large parameter $\lambda$ with $p^{\prime}(\rho) > 0 $ for $\rho >
0$, and $\lambda$ the reciprocal of the Mach number $M$. For
simplicity, we only concern the Cauchy problem of system
(\ref{l1}). The initial data takes
\begin{equation}\label{l2}
\rho^{\lambda}(0, x) = 1 + \widetilde{\rho}_{0}^{\lambda}(x), \ \
v^{\lambda}(0, x) = v_{0}(x) + \widetilde{v}_{0}^{\lambda}(x), \ \
F^{\lambda}(0, x) = F_{0}(x) + \widetilde{F}_{0}^{\lambda}(x).
\end{equation}
where $\rho^\lambda(0, x)$, $F^\lambda(0, x)$ satisfy
$$\rho^\lambda(0, x)\det F^\lambda(0, x) = 1,$$
$v_{0}(x)$, $F_{0}(x)$ satisfy the incompressible constraints
(\ref{a3}) and $\widetilde{\rho}_{0}^{\lambda}(x)$,
$\widetilde{v}_{0}^{\lambda}(x)$, $\widetilde{F}_{0}^{\lambda}(x)$
are assumed to satisfy
\begin{equation}\label{l3}
\|\widetilde{\rho}_{0}^{\lambda}(x)\|_{s} \leq
\delta_{0}/\lambda^{2},\ \ \|\widetilde{v}_{0}^{\lambda}(x)\|_{s +
1} \leq \delta_{0}/\lambda,\ \
\|\widetilde{F}_{0}^{\lambda}(x)\|_{s} \leq \delta_{0}/\lambda.
\end{equation}
Here $\delta_{0}$ is a small positive constant and $s$ is an
integer with $s \geq 4$.

For the above system, we can state the following theorem:

\begin{theorem}\label{thm51}
The global classical solution for system (\ref{a1})-(\ref{a2}) can
be viewed as the incompressible limit of system
(\ref{l1})-(\ref{l2}) if (\ref{a3}), (\ref{e2}), (\ref{w}) and
(\ref{l3}) hold and the incompressible initial data satisfies
$$\|v_0\|_{H^s}^2 + \|E_0\|_{H^s}^2 \leq \varepsilon_0$$
for a sufficiently small constant $\varepsilon_0$.
\end{theorem}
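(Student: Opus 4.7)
My plan is to combine uniform-in-$\lambda$ global energy estimates for the compressible system (\ref{l1}), modeled on the proof of Theorem \ref{thm41}, with an Aubin-Lions compactness argument for the passage $\lambda \to \infty$, in the spirit of the Klainerman-Majda framework and the viscoelastic version in \cite{Lei3}. After the rescaling $\sigma^\lambda := \lambda(\rho^\lambda - 1)$, the hypothesis (\ref{l3}) supplies initial data which are small in $H^s$ uniformly in $\lambda$, and the singular pressure term $\lambda a(\rho^\lambda)\nabla \sigma^\lambda$ becomes the $L^2$-skew adjoint of $\lambda(1 + \sigma^\lambda/\lambda)\nabla\cdot v^\lambda$ in the rescaled continuity equation up to zeroth-order corrections of size $O(1/\lambda)$, so that $L^2$ pairings generate no net power of $\lambda$.

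For the first step I would set $E^\lambda := F^\lambda - I$ and transcribe the energy framework of Section 4 into the variables $(\sigma^\lambda, v^\lambda, E^\lambda)$. With the auxiliary variable $w^\lambda := \Delta v^\lambda + \mu^{-1}\nabla\cdot E^\lambda$ as in (\ref{k5}), I would control
\[
\mathcal{E}^\lambda(t) := \|\sigma^\lambda\|_{H^s}^2 + \|v^\lambda\|_{H^s}^2 + \|E^\lambda\|_{H^s}^2 + \|w^\lambda\|_{H^{s-2}}^2
\]
together with the parabolic dissipation $\mu\int_0^t \|\nabla v^\lambda\|_{H^s}^2\, d\tau$. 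The compressible analogues of Lemmas \ref{lem22} and \ref{lem23} hold with source terms of order $\rho^\lambda - 1 = O(1/\lambda)$; the Hodge decomposition leading to (\ref{k12}) still delivers the weak dissipation $\|\Delta E^\lambda\|^2 \leq C(\|\nabla w^\lambda\|^2 + \|\nabla\Delta v^\lambda\|^2)$ modulo negligible corrections; and the commutators involving the density-dependent coefficients $1/\rho^\lambda$, $a(\rho^\lambda)$ and the singular operator $\lambda(\nabla, \nabla\cdot)$ are tamed by writing $\rho^\lambda - 1 = \sigma^\lambda/\lambda$ to absorb any stray factor of $\lambda$. A bootstrap parallel to (\ref{k13})--(\ref{k14}) then closes under the smallness hypotheses, yielding $\mathcal{E}^\lambda(t) \leq C \varepsilon_0$ uniformly in $\lambda$ and in $t \geq 0$.

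Next, the uniform bound together with the evolution equations gives $\partial_t(v^\lambda, E^\lambda) \in L^\infty_t H^{s-1}_x$, so the Aubin-Lions lemma extracts a subsequential strong limit $(v^\lambda, E^\lambda) \to (v, E)$ in $C_{\rm loc}([0,\infty); H^{s-1}_{\rm loc})$. The uniform $H^s$ control of $\sigma^\lambda$ forces $\rho^\lambda \to 1$ uniformly, so the rescaled continuity equation gives $\nabla\cdot v^\lambda \to 0$ in distributions, and therefore $\nabla\cdot v = 0$. Passing to the limit in the momentum and transport equations identifies $(v, E)$ as the unique global solution of (\ref{a1})--(\ref{a2}) from Theorem \ref{thm41}; uniqueness upgrades subsequential convergence to full convergence, while $\det(I+E) = 1$ follows from Lemma \ref{lem21}.

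The principal obstacle I anticipate lies in the uniform-in-$\lambda$ bootstrap. Every time a derivative lands on the singular operator or on a density-dependent coefficient, a spurious factor of $\lambda$ can appear, and these must be consistently absorbed using $\rho^\lambda - 1 = \sigma^\lambda/\lambda$ and the skew-symmetry of the acoustic part at every order of derivative. A related subtlety is that the dissipation mechanism of Section 4 relied on the exact constraints of Lemmas \ref{lem22} and \ref{lem23}; in the compressible setting these become identities with source terms of size $O(1/\lambda)$, and one must verify that the resulting errors remain small enough along the evolution so that the bootstrap leading to (\ref{k13}) still closes.
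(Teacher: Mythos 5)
Your proposal follows essentially the same route the paper takes: the paper reduces Theorem \ref{thm51} to Lemma \ref{lem51}, the uniform-in-$\lambda$ energy estimate for $E_s(V^{\lambda}(t)) = \|\lambda(\rho^\lambda - 1)\|_{H^s}^2 + \|v^\lambda\|_{H^s}^2 + \|E^\lambda\|_{H^s}^2$ together with the $H^{s-1}$ bound on $\partial_t V^\lambda$ needed for the compactness step, and defers the detailed derivation to the 2-D argument of \cite{Lei3}, which is exactly the Klainerman--Majda-type scheme you outline (rescaled density, skew-symmetric acoustic part, auxiliary dissipation structure, then passage to the limit). The one small caveat is that the compressible solutions are a priori only defined on $[0, T^\lambda]$ with $T^\lambda \to \infty$ as $\lambda \to \infty$, rather than globally for each fixed $\lambda$ as your uniform-in-$t$ claim suggests, but this is enough to pass to the limit on any fixed time interval.
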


The proof of Theorem \ref{thm51} relies on  the following Lemma
\ref{lem51}, namely, the uniform energy estimates with respect to
the parameter $\lambda$, which was proved in \cite{Lei3} in 2-D
case. The methods to prove the lemma, as well as the theorem, are
very similar in the 3-D cases here. We will not repeat the process
and
want to refer to \cite{Lei3} for details.\\

\begin{lemma}\label{lem51}
Consider the local solutions of the compressible viscoelastic
model (\ref{l1})-(\ref{l2}) under the constraints (\ref{a3}),
(\ref{e2}), (\ref{w})  and (\ref{l3}). Then the solution
$(\rho^{\lambda},\ v^{\lambda},\ F^{\lambda})$ to system
(\ref{l1})-(\ref{l2}) satisfies the following estimates
\begin{equation}\label{ccc}
      \left\{\begin{array}{l@{\quad\ \quad}l}
E_s(V^{\lambda}(t)) + \mu\int_0^{t}\|\nabla v^{\lambda}\|_s^2 \
dt\leq C\epsilon_0,\\
E_{s - 1}(\partial_{t}V^{\lambda}(t)) + \mu\int_0^{t}\|\nabla
\partial_t v^{\lambda}\|_{s-1}^2 \
dt\leq C\exp {Ct}.
\end{array}\right.
\end{equation}
 for any $t\in [0,
T^\lambda]$ and a universal constant C independent of $\lambda$ if
the initial data satisfies
$$\|v_{0}\|_{H^s}^2 + \|E_{0}\|_{H^s}^2 < \varepsilon_0.$$
Here $\varepsilon_0$ is a small enough constant and the energy
$E_s(V^{\lambda}(t))$ is defined as
$$E_s(V^{\lambda}(t)) = \|\lambda(\rho^\lambda - 1)\|_{H^s}^2
+ \|v^\lambda\|_{H^s}^2 + \|E^\lambda\|_{H^s}^2.$$ Moreover
$T_\lambda \rightarrow \infty$, as $\lambda\rightarrow + \infty$.
\end{lemma}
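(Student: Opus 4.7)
The plan is to symmetrize the compressible system and then run an energy argument that combines the standard low-Mach hyperbolic-parabolic framework with the dissipative structure developed in Section 4, so that all constants come out uniform in $\lambda$.

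First I would introduce the scaled density perturbation $\sigma^{\lambda}=\lambda(\rho^{\lambda}-1)$ and rewrite (\ref{l1}) in the unknowns $(\sigma^{\lambda},v^{\lambda},E^{\lambda})$ with $E^{\lambda}=F^{\lambda}-I$. The momentum equation then produces a pressure term $\lambda p'(1)\nabla\sigma^{\lambda}$ plus an $O(\sigma^{\lambda}\nabla\sigma^{\lambda})$ remainder, while the continuity equation reads $\partial_{t}\sigma^{\lambda}+\lambda\nabla\cdot v^{\lambda}+v^{\lambda}\!\cdot\!\nabla\sigma^{\lambda}+\sigma^{\lambda}\nabla\cdot v^{\lambda}=0$. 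The two $O(\lambda)$ contributions are formally skew-adjoint in the pairing $(\sigma^{\lambda},\sigma^{\lambda})+\tfrac{1}{p'(1)}(v^{\lambda},v^{\lambda})$, so they drop out of the $L^{2}$ energy identity. This is the key algebraic fact that makes the whole scheme $\lambda$-uniform.

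Next I would apply $\nabla^{\alpha}$ for $|\alpha|\le s$ to each of the three equations and pair with the same weighted inner product. The viscosity yields the dissipation $\mu\|\nabla v^{\lambda}\|_{s}^{2}$, and every commutator $[\nabla^{\alpha},v^{\lambda}\cdot\nabla]$, $[\nabla^{\alpha},\sigma^{\lambda}]$, etc.\ can be controlled by Proposition \ref{prop32} together with the interpolation bounds of Lemma \ref{lem31}, producing terms of the form $E_{s}(V^{\lambda})^{1/2}\cdot(\text{dissipation})^{1/2}\cdot(\text{energy})$ that are absorbed into $\mu\|\nabla v^{\lambda}\|_{s}^{2}$ and a Gronwall factor. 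The composition Proposition \ref{prop31} is used to expand $p'(\rho^{\lambda})/\rho^{\lambda}$, $1/\rho^{\lambda}$ and $\rho^{\lambda}F^{\lambda}(F^{\lambda})^{T}$ near equilibrium without producing $\lambda$-dependent constants. To close the bound on $\|E^{\lambda}\|_{H^{s}}$ (which has no direct dissipation) I would reproduce the Section 4 device: the identities of Lemmas \ref{lem22} and \ref{lem23}, valid for the compressible flow as well once one works modulo $\sigma^{\lambda}$, let us split $\Delta E^{\lambda}=\nabla\nabla\cdot E^{\lambda}-\nabla\times\nabla\times E^{\lambda}$, introduce the auxiliary $w^{\lambda}=\Delta v^{\lambda}+\mu^{-1}\nabla\cdot E^{\lambda}$, and absorb $\|\nabla\times\nabla\times E^{\lambda}\|^{2}$ into $C\|E^{\lambda}\|_{H^{s}}^{2}\|\Delta E^{\lambda}\|^{2}$, which is small once $\varepsilon_{0}$ is small.

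For the second inequality I would differentiate the symmetrized system once in $t$ and repeat the $H^{s-1}$ estimate. The difficulty is that the initial value of $\partial_{t}V^{\lambda}$ at $t=0$, read off from the equations, contains acoustic contributions of size $O(1)$ rather than $O(\varepsilon_{0})$, because $\partial_{t}v^{\lambda}|_{t=0}$ picks up $\lambda^{2}p'(1)\nabla\tilde\rho_{0}^{\lambda}/\rho_{0}^{\lambda}=O(1)$ by (\ref{l3}); this forces the right-hand side to be $C\exp(Ct)$ rather than $C\varepsilon_{0}$. That growth is acceptable because it is still independent of $\lambda$ and suffices for strong convergence on compact time intervals. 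A continuation argument then combines the first estimate with the local existence theory for (\ref{l1}) to conclude $T_{\lambda}\to\infty$ as $\lambda\to\infty$: as long as $E_{s}(V^{\lambda})\le C\varepsilon_{0}$, the compressible local solution can be extended.

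The main obstacle I expect is maintaining $\lambda$-uniformity through the elastic stress term $\nabla\cdot(\rho^{\lambda}F^{\lambda}(F^{\lambda})^{T})$. In the incompressible case the weak dissipation mechanism rests on the exact cancellation of the linear part against $\mu\Delta v$ via the variable $w$; in the compressible case one must check that the extra $\sigma^{\lambda}$-dependent corrections do not create $\lambda$-dependent coefficients when one inverts the $\nabla\cdot E^{\lambda}$ part of $w^{\lambda}$, and that the resonance between acoustic waves and the shear wave speed $\beta=1$ from (\ref{e2}) does not obstruct the Section 4 bound (\ref{k12}). Verifying that the constant in $\|E^{\lambda}\|_{H^{s}}\le\frac{1}{\sqrt{2C}}$ is $\lambda$-independent is the analytic heart of the argument, exactly as in \cite{Lei3}.
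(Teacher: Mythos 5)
You have reconstructed essentially the argument the paper intends: note that the paper itself gives no proof of Lemma \ref{lem51} at all, but simply defers to the 2-D computation of \cite{Lei3}, and that computation is exactly the combination you describe --- Klainerman--Majda symmetrization in $\sigma^{\lambda}=\lambda(\rho^{\lambda}-1)$ so that the two $O(\lambda)$ terms are skew-adjoint in the weighted pairing, $\lambda$-uniform commutator estimates via Propositions \ref{prop31}--\ref{prop32} and Lemma \ref{lem31}, the Section 4 auxiliary variable $w^{\lambda}=\Delta v^{\lambda}+\mu^{-1}\nabla\cdot E^{\lambda}$ to manufacture weak dissipation for $E^{\lambda}$, and a once-differentiated-in-time estimate whose $C\exp(Ct)$ (rather than $C\varepsilon_{0}$) bound is forced, exactly as you say, by the $O(1)$ acoustic component of $\partial_{t}v^{\lambda}|_{t=0}$ under (\ref{l3}).

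The one place where your sketch stops short of the decisive computation is the interaction of the singular pressure with $w^{\lambda}$. In the incompressible case the pressure is annihilated by the Leray projection; here the term $\lambda^{2}p'(\rho^{\lambda})\nabla\rho^{\lambda}/\rho^{\lambda}\sim \lambda p'(1)\nabla\sigma^{\lambda}$ survives, and its pairing with the $\mu^{-1}\nabla\cdot E^{\lambda}$ part of $w^{\lambda}$ is a priori of size $\lambda\,\varepsilon_{0}^{2}$, which would ruin $\lambda$-uniformity. What rescues it is the compressible replacement of Lemma \ref{lem22}: the Piola identity gives $\nabla\cdot(\rho^{\lambda}(F^{\lambda})^{T})=0$, hence $\nabla_{j}E^{\lambda}_{ji}=-\lambda^{-1}\nabla_{i}\sigma^{\lambda}+(\text{quadratic})$, so that after integration by parts $\lambda\big(\nabla\sigma^{\lambda},\nabla\cdot E^{\lambda}\big)$ collapses to $-\mu^{-1}\|\nabla\sigma^{\lambda}\|^{2}$ plus controllable remainders --- the $\lambda$'s cancel and the sign is even favorable. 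Your remark that ``one must check that the extra $\sigma^{\lambda}$-dependent corrections do not create $\lambda$-dependent coefficients'' points at precisely this, but as written it is flagged rather than verified; the same goes for the modified trace constraint ($\det F^{\lambda}=1/\rho^{\lambda}$, so $\mathrm{tr}\,E^{\lambda}$ acquires a $-\sigma^{\lambda}/\lambda$ contribution entering (\ref{e3})). The curl identity of Lemma \ref{lem23} does carry over verbatim, as you assume, since it only encodes commutation of Lagrangian derivatives. So: correct plan, same route as the cited reference, but the step on which $\lambda$-uniformity actually hinges is named rather than carried out.
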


\section*{acknowledgement} Z. Lei was partially supported by the
National Science Foundation of China under grant 10225102 and
Foundation for Candidates of Excellent Doctoral Dissertation of
China. C. Liu was partially supported by National Science
Foundation grants NSF-DMS 0405850 and NSF-DMS 0509094. Y. Zhou was
partially supported by the National Science Foundation of China
under grant 10225102 and a 973 project of the National Sciential
Foundation of China. The authors also want to thank Professors
Weinan E, Fanghua Lin and Noel Walkington for many helpful
discussions.

%
%

\end{document}